\documentclass{amsart}
\usepackage{amsthm}
\usepackage{hyperref}
\newtheorem{theorem}{Theorem}
\newtheorem{definition}[theorem]{Definition}
\newtheorem{lemma}[theorem]{Lemma}
\newtheorem{proposition}[theorem]{Proposition}
\newtheorem{assumption}[theorem]{Assumption}
\newtheorem{corollary}[theorem]{Corollary}
\newtheorem{remark}[theorem]{Remark}
\newtheorem{example}[theorem]{Example}

\newcommand{\E}{\mathbb{E}}

\providecommand{\norm}[1]{\ensuremath{\lVert#1\rVert}}
\providecommand{\abs}[1]{\ensuremath{\lvert#1\rvert}}

\usepackage{color}
\definecolor{darkgreen}{rgb}{0, .5, 0}
\definecolor{darkred}{rgb}{.5, 0, 0}
\definecolor{orange}{rgb}{1, 0.3, 0.1}

\title[Multilinearity preserving property]{Independent increment processes: A multilinearity preserving property}
\date{\today}
\thanks{FEB acknowledges financial support from FINEWSTOCH, a research project funded by the Norwegian Research Council. We are grateful to the referee and associate editor for suggestions that greatly improved the presentation of the paper.}
\author{Fred Espen Benth, Nils Detering and Paul Kr\"uhner}
\address{Fred Espen Benth \\
University of Oslo\\
Department of Mathematics \\
P.O. Box 1053, Blindern\\
N--0316 Oslo, Norway}
\email[]{fredb\@@math.uio.no}
\address{Nils Detering \\ 
Department of Statistics and Applied Probability\\
CA 93106 Santa Barbara, USA}
\email[]{detering\@@pstat.ucsb.edu}
\address{Paul Kr\"uhner \\
Institute for Financial and Actuarial Mathematics \\
The University of Liverpool \\
The Mathematical Sciences Building \\
Liverpool L69 7ZL, UK}
\email[]{paul.eisenberg\@@liverpool.ac.uk}

\keywords{Infinite dimensional stochastic processes, polynomial processes, Banach algebras, conditional expectation, multilinear maps}

\begin{document}
\maketitle

\begin{abstract}
We observe a multilinearity preserving property of conditional expectation for infinite dimensional independent increment processes defined on some abstract Banach space $B$. It is similar in nature to the polynomial preserving property analysed greatly for finite dimensional stochastic processes and thus offers an infinite dimensional generalisation. However, while polynomials are defined using the multiplication operator and as such require a Banach algebra structure, the multilinearity preserving property we prove here holds even for processes defined on a Banach space which is not necessarily a Banach algebra. In the special case of $B$ being a commutative Banach algebra, we show that independent increment processes are polynomial processes in a sense that coincides with a canonical extension of polynomial processes from the finite dimensional case. The assumption of commutativity is shown to be crucial and in a non-commutative Banach algebra the multilinearity concept arises naturally. Some of our results hold beyond independent increment processes and thus shed light on infinite dimensional polynomial processes in general. 
\end{abstract}

\section{Introduction}
An $\mathbb R$-valued process $(X(t))_{t\geq 0}$ is said to be a {\it polynomial process} if for every polynomial $p$ of degree $n$, there exists another polynomial $q$  of degree at most $n$ such that $\E [ p(X(t)) \,| \mathcal{F}_s  ]=q(X(s)) $ for every $t\geq s\geq 0$. The polynomial $q$ may have deterministic time-dependent coefficients. Examples of polynomial processes in $\mathbb{R}$ are affine processes or the multidimensional Jacobi process, among others (see Ackerer, Filipovi\'c and Pulido~\cite{AFP} for an application of the 
Jacobi process to stochastic volatility). Polynomial processes with values in the Euclidean space $\mathbb R^d, d<\infty$, or subsets thereof have received much attention recently especially due to their applications in financial mathematics. We refer the reader to Cuchiero, Keller-Ressel and Teichmann~\cite{CKT}, Filipovi\'c and Larsson~\cite{LF} and Foreman and S\o rensen \cite{FS}, and the references therein for an analysis and application of these processes and an overview of the existing literature. 

In the present paper we lift the notion of polynomial processes to general Banach spaces with a particular focus on independent increment processes. We thus show in a first instance how polynomial processes can be extended to an infinite dimensional setting and shed light on the special role commutative Banach algebras play in this context. We introduce a multilinearity preserving property for processes in a general Banach space and a polynomial preserving property for processes with values in a Banach algebra. 

To explain our approach in slightly more detail, let $\mathcal{L}_n : B^n \rightarrow B$, $n\in\mathbb N$, be a multilinear map on $B^n$, the product space of $n$ copies of the Banach space $B$. We say that a $B$-valued stochastic process  $(X(t))_{t\geq 0}$ is a {\it multilinear process}
if for every $n\in\mathbb N$ and every multilinear map $\mathcal L_n$, it holds that  
$$
\E [ \mathcal{L}_n(X(t),\dots,X(t)) \,| \mathcal{F}_s  ]=\sum_{k=0}^n  \mathcal{M}_k(X(s;t),\dots,X(s;t))
$$
for all $t\geq s\geq 0$. Here, for $1 \leq k\leq n, \mathcal M_k$ are again multilinear maps, now on $B^k$ and $\mathcal{M}_0\in B$ is a constant. Further $X(s;t)$ is an $\mathcal{F}_s$-measurable random variable with values in $B$. Often this is simply $X(s)$, and moreover, the multilinear maps 
$\mathcal M_k$ may be depending (deterministically) on $t$ and $s$. 
This of course includes the representation for monomials (and by linearity also polynomials) by defining 
$\mathcal{L}_n:B^n \rightarrow B$ by $\mathcal{L}_n (x_1,\dots,x_n) = x_1\cdots x_n $ if a designated multiplication in $B$ is defined. The multilinearity property thus extends naturally the idea of the polynomial property in that {\em moment like} quantities of $(X(t))_{t\geq 0}$ 
can be easily calculated. The structure, however, does not focus on the particular moments arising from the designated multiplication operator.

Our first main Theorem~\ref{prop:indep-incr-multi} shows that independent increment processes (and variants thereof) in a Banach space are multilinearity preserving processes. If the Banach space has a multiplication defined and forms a Banach algebra, we show in Theorem~\ref{poly:preserv} and Proposition~\ref{prop:genpoly:poly} that they are also (generalized) polynomial processes. Moreover, in Proposition~\ref{prop:form} we show that in most cases, the multilinear preserving property allows to calculate conditional expectations even of multilinear forms in contrast to maps, a property crucial for applications. As auxiliary results we derive several specific properties of conditional expectations in (possibly non-commutative) Banach algebras, which might be of independent interest. 

{\bf Applications:} There is a range of possible applications for our results. We work out two of them in more detail. First, we show how the multilinearity property comes in handy to calculate conditional moments of the norm of the process. The efficient calculation of moments is important for instance in order to determine population parameters in statistical estimation. Second, we apply our results to the pricing of options on commodity forwards when the entire forward curve is modeled as an element in the Filipovi\'c space (see for instance Benth and Kr\"uhner \cite{BK}). In this setting a commutative Banach algebra can be defined by pointwise multiplication of the forward prices and we exploit the generated polynomial structure for pricing options on forwards. This leads to an easy to calculate formula for pricing European options on the forward price.

Additionally we show how processes whose values are measures can be treated, linking our analysis and definitions to the work of Cuchiero, Larsson and Svaluto-Ferro \cite{CLSF}. Examples of relevance for non-commutative Banach algebras include matrix-valued stochastic processes or more general processes of linear bounded operators where multiplication is the concatenation of operators. These cases cover infinite-dimensional stochastic volatility models (see Benth, R\"udiger and S\"uss \cite{BRS}).


The outline of the paper is as follows. In Section~\ref{sec:multilin:process} we introduce our notion of a multilinear process and prove our first main results for processes with values in a Banach space. In Section~\ref{sec:indep:incr} we restrict the state space to be a commutative Banach algebra and analyse polynomial versus multilinear processes. We pay special attention to the Ornstein-Uhlenbeck dynamics. Finally, in Section~\ref{sec:applic} we provide several possible applications of our results. The Appendix~\ref{sec:cond:exp} contains some important auxiliary results about conditional expectation in Banach spaces and algebras, which we could not find in the literature.

\section{Multilinear maps and multilinear processes}\label{sec:multilin:process}
In this section we study stochastic processes with values in a Banach space $B$ which possess certain stability properties with respect to "polynomials" and
conditional expectation. We introduce polynomials via certain multilinear maps,
that are defined next:
 
Denote by $B^k=B \times \cdots \times B$ the product space of $k\in\mathbb N$ copies of $B$ equipped with the norm  
$\|\cdot\|_k:=\sup_{1\leq i\leq k}\|\cdot\|$. The product space $B^k$ becomes again a Banach space.  
We introduce the following definition of $k$-linear maps, that will play an important role in the sequel:
\begin{definition}
\label{def:k-maps}
We say that $\mathcal{L}_k : B^k \rightarrow B$ for $k\in\mathbb N$ is a {\tt $k$-linear map} if it is linear in each argument in the sense that for any
$x_1,x_2,\ldots,x_k,y\in B$ and $a,b\in\mathbb F$ 
\begin{align*}
\mathcal{L}_k (x_1,\dots,x_{j-1},&ax_j +by ,x_{j+1},\dots , x_k) \\ &=a \mathcal{L}_k (x_1,\ldots, x_k) + b \mathcal{L}_k (x_1,\dots,x_{j-1},y ,x_{j+1},\dots, x_k)
\end{align*}
for each $j=1,\ldots,k$. A $k$-linear map $\mathcal L_k$ is {\tt bounded} if there exists a constant $K>0$ such that 
$$
\|\mathcal L_k(x_1,\ldots,x_k)\|\leq K\|x_1\|\cdots\|x_k\|
$$
for all $x_1,\ldots,x_k\in B$. We denote the space of bounded $k$-linear maps by $L_k (B)$. 
\end{definition}
Notice that $L_1(B)=L(B)$, the space of bounded linear operators. Often we will call a $k$-linear map simply multilinear without specifying the dimension.
 
A $k$-linear map $\mathcal{L}_k$ induces a {\it $k$-monomial} $\mathcal{M}_k: B \rightarrow B$ by
\begin{equation}
\mathcal{M}_k(x):=\mathcal{L}_k (x,\dots,x) .
\end{equation}
If $\mathcal L_k\in L_k(B)$, we see that $\|\mathcal M_k(x)\|\leq K\|x\|^k$, and we denote the set of all such $k$-monomials
by $M_k(B)$. Of course, $M_1(B)=L(B)$, the space of bounded operators. Additionally, we define $M_0(B):=B$ for completeness. 
$M_0(B)$ will play the role as the space of "constants", or, zero-order monomials. We remark that $M_k(B)$ is a vector space over the same field as
$B$. We have the following result showing that the monomials are locally Lipschitz continuous on $B$:
\begin{proposition}
\label{prop:local-lip-mono}
If $\mathcal M_k\in M_k(B)$, then for any $x,y\in B$
$$
\|\mathcal M_k(x)-\mathcal M_k(y)\|\leq C(\|x\|,\|y\|)\|x-y\|
$$
where $C(\|x\|,\|y\|)=K\sum_{i=1}^{k}\|x\|^{k-i}\|y\|^{i-1}$ for some positive constant $K$.
\end{proposition}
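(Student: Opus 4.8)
The plan is to use the defining representation $\mathcal{M}_k(x)=\mathcal{L}_k(x,\dots,x)$ for some bounded $k$-linear map $\mathcal{L}_k\in L_k(B)$, fix once and for all such an $\mathcal{L}_k$ together with its boundedness constant (which will be the $K$ in the statement), and then express $\mathcal{M}_k(x)-\mathcal{M}_k(y)$ as a telescoping sum in which the arguments are switched from $x$ to $y$ one slot at a time. Concretely, I would set $a_j:=\mathcal{L}_k(\underbrace{x,\dots,x}_{j},\underbrace{y,\dots,y}_{k-j})$ for $j=0,1,\dots,k$, so that $a_k=\mathcal{M}_k(x)$ and $a_0=\mathcal{M}_k(y)$, and write $\mathcal{M}_k(x)-\mathcal{M}_k(y)=a_k-a_0=\sum_{j=1}^{k}(a_j-a_{j-1})$.

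The key step is then to observe that $a_j$ and $a_{j-1}$ differ only in the $j$-th argument, so by linearity in that slot $a_j-a_{j-1}=\mathcal{L}_k(x,\dots,x,\,x-y,\,y,\dots,y)$, where the entry $x-y$ occupies position $j$, the first $j-1$ entries equal $x$, and the last $k-j$ entries equal $y$. Applying the boundedness estimate of $\mathcal{L}_k$ to this term gives $\|a_j-a_{j-1}\|\le K\|x\|^{j-1}\|x-y\|\,\|y\|^{k-j}$. Combining this with the triangle inequality applied to the telescoping sum, factoring out $\|x-y\|$, and re-indexing via $i=k-j+1$ (so that $j-1=k-i$ and $k-j=i-1$) yields $\|\mathcal{M}_k(x)-\mathcal{M}_k(y)\|\le K\Bigl(\sum_{i=1}^{k}\|x\|^{k-i}\|y\|^{i-1}\Bigr)\|x-y\|$, which is exactly the claimed bound with $C(\|x\|,\|y\|)=K\sum_{i=1}^{k}\|x\|^{k-i}\|y\|^{i-1}$.

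I do not expect a genuine obstacle here: the argument is essentially a bookkeeping exercise with the multilinear telescoping identity. The only points deserving a line of care are, first, that $\mathcal{M}_k$ may be induced by more than one $k$-linear map, so one should explicitly fix a representative $\mathcal{L}_k$ and take $K$ to be its bound, which is harmless since the statement only asserts the existence of such a constant; and second, matching the index convention in the telescoping sum to the precise form of $C(\|x\|,\|y\|)$ stated in the proposition, which is why the re-indexing step is included. The degenerate case $k=0$ is not covered (and needs no coverage), since $M_0(B)=B$ consists of constants for which the inequality is trivial.
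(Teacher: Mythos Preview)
Your proposal is correct and follows essentially the same approach as the paper: both arguments telescope $\mathcal{L}_k(x,\dots,x)-\mathcal{L}_k(y,\dots,y)$ by swapping one argument at a time, use multilinearity to extract $x-y$ from the slot where the swap occurs, and then apply the boundedness of $\mathcal{L}_k$ term by term. The only cosmetic difference is the order in which the slots are switched (you place the $x$'s first and the $y$'s last, whereas the paper does the reverse), which is why you need the re-indexing $i=k-j+1$ to match the stated form of $C(\|x\|,\|y\|)$; the paper separates out the case $k=1$ explicitly, but your argument already covers it.
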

\begin{proof}
We notice that for $k=1$, $\mathcal M_1\in L(B)$ and therefore Lipschitz continuous. Let therefore $k\geq 2$. As $\mathcal M_k\in M_k(B)$,
we have for $x\in B$ that $\mathcal M_k(x)=\mathcal L_k(x,\ldots,x)$ for a bounded $k$-linear map, $\mathcal L_k\in L_k(B)$. By adding and subtracting
$\mathcal L_k(y,\ldots,y,x,\ldots,x)$, where $y\in B$ goes successively through all the $k-1$ first coordinates, we find from the triangle inequality and 
the multilinearity property of $\mathcal L_k$,
\begin{align*}
\|\mathcal M_k(x)-\mathcal M_k(y)\|&=\|\mathcal L_k(x,\ldots,x)-\mathcal L_k(y,\ldots,y)\| \\
&\leq \|\mathcal L_k(x,\ldots,x)-\mathcal L_k(y,x,\ldots,x)\| \\
&\qquad+\|\mathcal L_k(y,x,\ldots,x)-\mathcal L_k(y,y,x,\ldots,x)\| \\
&\qquad+\cdots \\
&\qquad+
\|\mathcal L_k(y,\ldots,y,x)-\mathcal L_k(y,\ldots,y)\| \\
&=\|\mathcal L_k(x-y,\ldots,x)\| \\
&\qquad+\|\mathcal L_k(y,x-y,\ldots,x)\| \\
&\qquad+\cdots \\
&\qquad+\|\mathcal L_k(y,\ldots,y,x-y)\| \\
&\leq K\|x-y\|\|x\|^{k-1}+K\|y\|\|x-y\|\|x\|^{k-2}+\ldots \\
&\qquad+K\|y\|^{k-1}\|x-y\|.
\end{align*}
The last inequality follows from the boundedness of $\mathcal L_k$. The result follows.
\end{proof}

Let $(X(t))_{t\geq 0}$ be a $B$-valued stochastic process, that is, a family of $B$-valued random variables $X(t)$ indexed by $t\geq 0$.
In the following we shall be interested in the conditional expectation $\mathcal{M}_k(X(t))$ given $\mathcal{F}_s$ for $t\geq s\geq 0$ where
$\mathcal M_k\in M_k(B)$. More specifically, we want to define and study processes $(X(t))_{t\geq 0}$ where for any 
$\mathcal M_k\in M_k(B)$ there exists a family of $j$th-order monomials $\overline{\mathcal{M}}_j \in M_j (B)$ with $j\leq k$ such that
\begin{equation}
\label{eq:multlinearprocess}
\E [\mathcal{M}_k(X(t)) \,|\,\mathcal F_s] =  \sum_{j=0}^k \overline{\mathcal{M}}_j (X(s;t)),
\end{equation}
and where $X(s;t)$ is some strongly $\mathcal F_s$-measurable random variable. As we see, we are interested in processes which preserve the "polynomial" order, as the monomials on the right hand side are not exceeding $k$ in their orders. Moreover, the $j$th-order monomials $\overline{\mathcal{M}}_j$ are allowed to depend (deterministically) on $s$ and $t$, however, we do not
state this explicitly to lessen the notational burden. 

A minimal requirement for studying \eqref{eq:multlinearprocess} is that $\mathcal{M}_k(X(t))$ is Bochner integrable. As $X(t)$ is strongly
measurable and $\mathcal M_k$ is continuous by Proposition~\ref{prop:local-lip-mono}, it follows from Lemma~\ref{lemma:cont-preserve-measur} in Appendix \ref{sec:cond:exp} that $\mathcal M_k(X(t))$ is strongly measurable.
We introduce the following assumption:
\begin{assumption}
\label{moment-assumption-X}
The process $(X(t))_{t\geq 0}$ has finite moments of all order, i.e., for any $n\in\mathbb N$, $\E[\|X(t)\|^n]<\infty$ for all $t\geq 0$. 
\end{assumption}
Since we have $\|\mathcal M_k(x)\|\leq K\|x\|^k$, it follows under Assumption~\ref{moment-assumption-X} that $\mathcal M_k(X(t))$ is Bochner integrable, and in particular the conditional expectation 
in \eqref{eq:multlinearprocess} exists. In the study of polynomial diffusions in $\mathbb{R}^k$, a moment assumption is usually not needed and follows from $\E[\|X(0)\|^n]<\infty$ for all $n$ (see Thm. 3.1. in Filipovi\'c and Larsson~\cite{LF}). For general state space and driving noise however, we will need it.

As a simple example, let us look at the case $B=\mathbb{R}$ and the function  $\mathcal{M}_k :  \mathbb{R} \rightarrow  \mathbb{R}$ given by $\mathcal{M}_k(x) = x^k$. Then one easily observes that $\mathcal{M}_k$ is induced by the $k$-linear map $\mathcal{L}_k : \mathbb{R} \times \cdots \times \mathbb{R}\rightarrow \mathbb{R}$ , $(x_1,\dots,x_k)\rightarrow x_1\cdot \dots \cdot x_k$. 
In Cuchiero et al.~\cite{CKT} and Filipovi\'c and Larsson \cite{LF}, a real-valued $\mathcal F_t$-adapted stochastic process $(X(t))_{t\geq 0}$ is called a {\it polynomial process} if for any $n\in\mathbb N$,
$\E [ (X(t))^n  \,|\,\mathcal F_s ] = q_n (X(s))$ for some polynomial $q_n$ of degree at most $n$. We will later see that $k$-linear maps arise naturally when dealing with polynomials in possibly non-commutative Banach algebras.

Next, let us define multilinear processes:
\begin{definition}\label{Multi:process}
Let $(X(t))_{t\geq 0}$ be a $B$-valued stochastic process and $(X(s;t))_{0 \leq s \leq t < \infty}$ a family of $B$-valued random variables, such that $X(s;t)$ is strongly $\mathcal{F}_s$-measurable. The process $(X(t))_{t\geq 0}$ is said to be a
{\tt multilinear process} with respect to the family $(X(s;t))_{0 \leq s \leq t < \infty}$ if for any $k\in\mathbb N$ and 
$\mathcal M_k\in M_k(B)$, 
there exists a family of $j$th-order monomials $\overline{\mathcal M}_j\in M_j(B)$, $j\leq k$,
such that for all $s\leq t$ it holds,
\begin{equation}
\label{def:multi-process-cond-exp}
\mathbb E[\mathcal{M}_k (X(t)) \,|\,\mathcal{F}_s]= \sum_{j=0}^k \overline{\mathcal{M}}_j (X(s;t)) .
\end{equation} 
\end{definition}  
Note that if we take a linear combination of monomials up to order $k\in\mathbb N$, that is, 
$\mathcal P_k:=\sum_{j=0}^k p_j\mathcal M_j$ for $p_j\in \mathbb F$ and $\mathcal M_j\in M_j(B)$ for $j=0,\ldots,k$, we find by 
the vector space structure of $M_j(B)$ that $\mathcal P_k$ can be represented by a sum of monomials up to degree $k$. Hence, we can use
a linear combination of monomials in the conditional expectation defining a multilinear process in Definition~\ref{Multi:process}. 
\begin{remark}
Instead of considering $\E [\mathcal{M}_k(X(t)) \,|\,\mathcal F_s] $ with $\mathcal M_k\in M_k(B)$ one could consider a $k$-linear form $\mathcal{L}_k : B^k \rightarrow \mathbb{F}$ and their expectation $\E [\mathcal{M}_k(X(t)) \,|\,\mathcal F_s] $. We show in Proposition~\ref{prop:form} that when $B$ is a Hilbert space and the process $(X(t))_{t\geq 0}$ is multilinear, then the representation of $k$-linear forms follows for multilinear processes. Therefore the multilinearity property is stronger and we consider it here. Moreover, if a dedicated multiplication operator exists then it is a bilinear form and the notion of polynomials is more naturally extended in this setting.
\end{remark}

We notice that in Definition~\ref{Multi:process} we claim the existence of a family of $\mathcal F_s$-measurable random variables $(X(s;t))_{0\leq s\leq t<\infty}$, rather than using $X(s)$ as argument on the right-hand side in \eqref{def:multi-process-cond-exp}. As we will see in Example~\ref{mild:sol:exampl:ind:Incr} this allows us to show that mild solutions to certain stochastic partial differential equations as the Ornstein-Uhlenbeck process are polynomial processes with respect to a smart choice of $(X(s;t))_{0\leq s\leq t<\infty}$. To capture these processes also our following definition of independent increment processes is a rather generous one and even for the case $B=\mathbb{R}$ includes processes which are not independent increment processes in the conventional sense. 
We define {\it independent increment processes} on general Banach spaces as:
\begin{definition}\label{ass:process}
The process $(X(t))_{t\geq 0}$ is called an {\tt independent increment process } if
\begin{enumerate}
\item $X(t)$ is strongly $\mathcal F_t$-measurable for any $t\geq 0$,
\item for every $t$ and every $s\leq t$, there exists a decomposition of $X(t)$ into a strongly $\mathcal F_s$-measurable part $X^{\parallel}(s;t)$ and a part $X^{\perp}(s;t)$ that is independent of $\mathcal F_s$ such that $X(t)=X^{\perp}(s;t) + X^{\parallel}(s;t)$,
\item all moments of $\|  X^{\perp}(s;t)  \|$ and $\|  X^{\parallel}(s;t) \|$ are integrable.
\end{enumerate}
\end{definition}
Applebaum~\cite{Apple} defines a L\'evy process on a separable Banach space as a $B$-valued stochastically continuous process $(L(t))_{t\geq 0}$
which is $\mathcal F_t$-adapted, the increments $L(t)-L(s)$ are independent of $\mathcal F_s$ for any $t>s\geq 0$ with distribution only
depending on $t-s$, and having c\`adl\`ag paths.  In view of Definition~\ref{ass:process}, $(L(t))_{t\geq 0}$ will be an independent
increment process with $L^{\perp}(s;t):=L(t)-L(s)$ and $L^{\parallel}(s;t):=L(s)$ as long as all moments of $\|L(t)\|$ are integrable
for any $t\geq 0$.  Property (3) in Definition~\ref{ass:process} follows by the fact that $L(t)-L(s)\stackrel{d}{=}L(t-s)$ by
definition of the L\'evy process. The canonical example of a L\'evy process is the Wiener process. In a separable Banach space,  Fernique's Theorem (see Peszat and Zabczyk~\cite{peszat.zabczyk.07}) ensures the moment condition (3) in Definition~\ref{ass:process} for a Wiener process. Also note that Property (3) in Definition~\ref{ass:process} implies especially that $\E[\|X(t)\|^n]<\infty$ for all $n\in\mathbb N$. We now provide a more interesting example 
of an independent increment process according to Definition~\ref{ass:process}. 

\begin{example}\label{mild:sol:exampl:ind:Incr}
Let $(\mathcal S_t)_{t\geq 0}$ be a $C_0$-semigroup on $B$ and $(W(t))_{t\geq 0}$ a $B$-valued Wiener process. By Fernique's Theorem all moments of $\|W(t)\|$ are finite. 
Consider the stochastic process $(X(t))_{t\geq 0}$ given by
\begin{equation}
\label{stoch-conv}
X(t)=\int_0^t\mathcal S_{t-s}\,dW(s).
\end{equation} 
As $(W(t))_{t\geq 0}$ in particular is square-integrable, it follows that the stochastic convolution $(X(t))_{t\geq 0}$ is a well-defined 
$\mathcal F_t$-adapted process in $B$ (see Applebaum~\cite{Apple} and Peszat and Zabczyk~\cite{peszat.zabczyk.07}). 
Moreover, it is known
(see again Applebaum~\cite{Apple} and Da Prato and Zabczyk~\cite{prato.zabczyk.14}) that $(X(t))_{t\geq 0}$ is a mild solution of the stochastic 
evolution equation
\begin{equation}
\label{inf-OU}
dX(t)=\mathcal A X(t)\,dt+dW(t)
\end{equation} 
where $\mathcal A$ is the (densely defined) generator of $(\mathcal S_t)_{t\geq 0}$. Furthermore, $(X(t))_{t\geq 0 }$ is a symmetric Gaussian process and by Fernique's Theorem all moments of $\| X(t)\|$ are finite.

We decompose $X(t)$ in \eqref{stoch-conv} into $X^{\perp}(s;t) :=\int_s^t\mathcal S_{t-u}dW(u)$ and  $X^{\parallel}(s;t) :=\int_0^s\mathcal S_{t-u}dW(u)$. We find that $X^{\perp}(s;t)$ is independent of $\mathcal F_s$ and
$X^{\parallel}(s;t)$ is $\mathcal F_s$-measurable and thus $(X(t))_{t\geq 0}$ is an independent increment process. Processes of the form (\ref{stoch-conv}) are relevant in financial mathematics. We provide an example from commodity markets later in the application section, Sect. \ref{sec:applic}.
\end{example}

\begin{theorem}
\label{prop:indep-incr-multi}
Suppose that $(X(t))_{t\geq 0}$ is an independent increment process and let $\mathcal{M}_k\in M_k(B)$. Then there exists a family of
$j$th-order monomials $\overline{\mathcal M}_j\in M_j(B)$, $0\leq j\leq k$, such that
\begin{equation}\label{cond:exp:multi}
\mathbb E[\mathcal{M}_k (X(t)) \,|\,\mathcal F_s]=\sum_{j=0}^{k} \overline{\mathcal{M}}_j (X^{\parallel}(s;t)),
\end{equation}
for any $s\leq t$, where the $\overline{\mathcal M}_j$'s depend on $s$ and $t$. In other words, $(X(t))_{t\geq 0}$ is a
multilinear process with respect to $(X^{\parallel}(s;t))_{0\leq s\leq t<\infty}$.
\end{theorem}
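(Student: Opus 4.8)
The plan is to expand $\mathcal M_k(X(t))$ by multilinearity of the map inducing it, using the decomposition $X(t)=X^{\perp}(s;t)+X^{\parallel}(s;t)$ from Definition~\ref{ass:process}, and then compute the conditional expectation term by term, exploiting that $X^{\parallel}(s;t)$ is $\mathcal F_s$-measurable while $X^{\perp}(s;t)$ is independent of $\mathcal F_s$. Concretely, fix $\mathcal M_k\in M_k(B)$, write $\mathcal M_k(x)=\mathcal L_k(x,\dots,x)$ for some $\mathcal L_k\in L_k(B)$, and abbreviate $P:=X^{\parallel}(s;t)$, $Q:=X^{\perp}(s;t)$. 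Linearity of $\mathcal L_k$ in each slot gives the finite expansion
\[
\mathcal M_k(X(t))=\mathcal L_k(P+Q,\dots,P+Q)=\sum_{A\subseteq\{1,\dots,k\}}\mathcal L_k\bigl(Z_1^A,\dots,Z_k^A\bigr),
\]
where $Z_i^A=P$ if $i\in A$ and $Z_i^A=Q$ otherwise; grouping the $2^k$ terms by $j:=\lvert A\rvert$ produces $k+1$ groups.

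Next I would take $\E[\,\cdot\,|\,\mathcal F_s]$ of each summand. For $A=\{i_1<\dots<i_j\}$ and $x_1,\dots,x_j\in B$, set $\mathcal N_A(x_1,\dots,x_j):=\E\bigl[\mathcal L_k(Z_1,\dots,Z_k)\bigr]$ with $Z_{i_m}=x_m$ and $Z_i=Q$ for $i\notin A$. Since $y\mapsto\mathcal L_k(\dots,y,\dots)$ is a bounded linear map, Lemma~\ref{lemma:cont-preserve-measur} shows $\mathcal L_k(Z_1,\dots,Z_k)$ is strongly measurable, and the bound $\|\mathcal L_k(Z_1,\dots,Z_k)\|\le K\|x_1\|\cdots\|x_j\|\,\|Q\|^{k-j}$ together with Definition~\ref{ass:process}(3) gives Bochner integrability; hence $\mathcal N_A$ is a well-defined $j$-linear map bounded by $K\,\E[\|Q\|^{k-j}]<\infty$, so $\mathcal N_A\in L_j(B)$ (for $j=0$, $\mathcal N_\emptyset=\E[\mathcal M_k(Q)]\in B$). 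The crucial step is to invoke the conditional-expectation results of Appendix~\ref{sec:cond:exp} to obtain
\[
\E\bigl[\mathcal L_k(Z_1^A,\dots,Z_k^A)\,\big|\,\mathcal F_s\bigr]=\mathcal N_A(P,\dots,P),
\]
i.e.\ the $\mathcal F_s$-measurable $P$-slots are pulled out of the conditional expectation while the independent $Q$-slots are replaced by an ordinary expectation, using independence of $P$ and $Q$ and finiteness of all relevant moments.

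To assemble, put $\mathcal L^{(j)}:=\sum_{\lvert A\rvert=j}\mathcal N_A\in L_j(B)$ and $\overline{\mathcal M}_j(x):=\mathcal L^{(j)}(x,\dots,x)\in M_j(B)$ for $1\le j\le k$, and $\overline{\mathcal M}_0:=\E[\mathcal M_k(X^{\perp}(s;t))]\in B=M_0(B)$. Summing the displayed identity over all $A$ and grouping by $j$ yields
\[
\E[\mathcal M_k(X(t))\,|\,\mathcal F_s]=\sum_{j=0}^{k}\overline{\mathcal M}_j\bigl(X^{\parallel}(s;t)\bigr),
\]
which is \eqref{cond:exp:multi}; each $\overline{\mathcal M}_j$ depends on $s,t$ only through (the law of) $X^{\perp}(s;t)$, and since $\mathcal M_k$ was arbitrary, $(X(t))_{t\ge0}$ is a multilinear process with respect to $(X^{\parallel}(s;t))_{0\le s\le t<\infty}$.

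The main obstacle is the Banach-space-valued conditional-expectation identity $\E[\mathcal L_k(Z_1^A,\dots,Z_k^A)\mid\mathcal F_s]=\mathcal N_A(P,\dots,P)$; the multilinear expansion and the boundedness/measurability bookkeeping are routine. One must simultaneously pull bounded-linear $\mathcal F_s$-measurable factors out of a Bochner conditional expectation and "freeze" those arguments while integrating out the independent ones — a Fubini-type statement for conditional expectations of multilinear maps of the pair $(P,Q)$ with $P$ measurable and $Q$ independent. I expect this to rest on the auxiliary lemmas of Appendix~\ref{sec:cond:exp}, applied (e.g.\ by induction on the number of independent slots, approximating $P$ by $\mathcal F_s$-measurable simple functions and using dominated convergence for Bochner conditional expectation) with enough generality to handle a $k$-linear map carrying several measurable and several independent arguments at once.
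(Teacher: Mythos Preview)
Your proposal is correct and follows essentially the same route as the paper: expand $\mathcal L_k(X(t),\dots,X(t))$ by multilinearity over the decomposition $X(t)=X^{\parallel}(s;t)+X^{\perp}(s;t)$, define for each subset $A$ the $|A|$-linear map obtained by integrating out the independent $Q$-slots, apply the ``freezing'' result of Appendix~\ref{sec:cond:exp} (Proposition~\ref{feezing:lemma}) to justify $\E[\mathcal L_k(Z_1^A,\dots,Z_k^A)\mid\mathcal F_s]=\mathcal N_A(P,\dots,P)$, and then group by $|A|$. The paper carries out precisely this argument, with the only additional content being an explicit verification of the hypotheses of Proposition~\ref{feezing:lemma} (local Lipschitz continuity of $(x,y)\mapsto\mathcal L_k(\dots)$ and the bounds \eqref{bound:1}--\eqref{bound:2}), which you correctly flag as routine bookkeeping.
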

\begin{proof}
For $k=0$ the claim is trivial, so assume that $k\geq 1$. Let $\mathcal{L}_k\in L_k(B)$ be such that 
$\mathcal{M}_k (v)=\mathcal{L}_k(v,\dots,v)$. Recall by Definition~\ref{ass:process} that $X(t)=X^{\perp}(s;t) + X^{\parallel}(s;t)$ 
for all $0\leq s\leq t<\infty$, where $X^{\parallel}(s;t)$ is strongly $\mathcal F_s$-measurable and $X^{\perp}(s;t)$ is independent of $\mathcal F_s$. 
Thus we get from multilinearity of $\mathcal L_k$ 
\begin{align*}
\mathcal M_k(X(t))&=\mathcal{L}_k(X(t),\dots,X(t)) \\
&=\mathcal{L}_k(X^{\perp}(s;t) +X^{\parallel}(s;t),\dots,X^{\perp}(s;t)+X^{\parallel}(s;t))\\
&=\mathcal{L}_k(X^{\parallel}(s;t),X^{\perp}(s;t)+X^{\parallel}(s;t),\dots,X^{\perp}(s;t)+X^{\parallel}(s;t)) \\
&\qquad+ \mathcal{L}_k(X^{\perp}(s;t),X^{\perp}(s;t)+X^{\parallel}(s;t),\dots,X^{\perp}(s;t)+X^{\parallel}(s;t)).
\end{align*}
Continuing like this over the remaining $k-1$ arguments one can decompose the above expression into a linear combination of
$2^k$ terms of the form
$$
\mathcal{L}_k(Y_{j,1},\dots,Y_{j,k})
$$
for $1\leq j\leq 2^k$ with $Y_{j,i}\in \{X^{\perp}(s;t),X^{\parallel}(s;t) \}$, and there are exactly $\binom{k}{n}$ terms $j$ for which $\# \{Y_{j,i} \,| Y_{j,i} =X^{\parallel}(s;t) \}=n$. 

Let us look at a particular term where $X^{\parallel}(s;t)$ appears in the first two arguments. Introduce the function 
$\mathcal L_{2,1}: B \times B \rightarrow B$ defined as
\begin{align*}
\mathcal L_{2,1}(y_1,y_2)&= \mathbb E[\mathcal{L}_k(y_1,y_2,X^{\perp}(s;t), \dots,X^{\perp}(s;t)) ] .
\end{align*}
The subscript $(2,1)$ denotes that $\mathcal L_{2,1}$ is the function related to the first term in which $X^{\parallel}(s;t)$ appears twice, where the ordering is irrelevant. 
In view of Proposition~\ref{feezing:lemma} in Appendix~\ref{sec:cond:exp}, let $f(x,y)=\mathcal{L}_k(y,y,x, \dots,x)$, $X=X^{\perp}(s;t)$ and $Y=X^{\parallel}(s;t)$. 
Then, $\sigma (X)=\sigma(X^{\perp}(s;t))$ and $\mathcal F_s$ are independent,  and $Y=X^{\parallel}(s;t)$ is strongly $\mathcal F_s$-measurable. First, we show that $(x,y)\mapsto f(x,y)$ is continuous: Indeed, for $(x,y),(u,v)\in B\times B$, we find by triangle inequality and
$\mathcal L_k\in L_k(B)$ that
\begin{align*}
\|f(u,v)-f(x,y)\|&=\|\mathcal L_k(v,v,u,\ldots,u)-\mathcal L_k(y,y,x,\ldots,x)\| \\
&\leq \|\mathcal L_k(v,v,u,\ldots,u)-\mathcal L_k(y,v,u,\ldots,u)\| \\
&\qquad+\|\mathcal L_k(y,v,u,\ldots,u)-\mathcal L_k(y,y,u,\ldots,u)\| \\
&\qquad+\|\mathcal L_k(y,y,u,\ldots,u)-\mathcal L_k(y,y,x,u,\ldots,u)\| \\
&\qquad \ldots \\
&\qquad+\|\mathcal L_k(y,y,x,\ldots,x,u)-\mathcal L_k(y,y,x,\ldots,x)\| \\
&=\|\mathcal L_k(v-y,v,u,\ldots,u)\|+\|\mathcal L_k(y,v-y,u,\ldots,u)\| \\
&\qquad+\|\mathcal L_k(y,y,u-x,u,\ldots,u)\| \\
&\qquad+\cdots \\
&\qquad+\|\mathcal L_k(y,y,x,\ldots,x,u-x)\| \\
&\leq K\|v-y\|\|v\|\|u\|^{k-2}+K\|y\|\|v-y\|\|u\|^{k-2} \\
&\qquad +K\|y\|^2\|u-x\|\|u\|^{k-3}+\cdots+K\|y\|^2\|u-x\|\|x\|^{k-2} \\
&=K\|u\|^{k-2}(\|v\|+\|y\|)\|v-y\| \\
&\qquad+K\|y\|^2(\sum_{n=0}^{k-3}\|x\|^n\|u\|^{k-3-n})\|u-x\|
\end{align*}
Thus, $(x,y)\mapsto f(x,y)$ is a local Lipschitz continuous map from $B\times B$ into $B$. 

Continuity implies that $f(X,y)$ is strongly measurable (see Lemma~\ref{lemma:cont-preserve-measur} in Appendix~\ref{sec:cond:exp}). Since 
\begin{equation}\label{bound:f}
\| f(X,y) \|\leq K \| y \|^2 \| X \|^{n-2}
\end{equation}
it follows that $f(X,y)$ is Bochner integrable by the finite moments condition on $\|X\|=\|W^{\perp}(s;t)\|$.
Furthermore,
\begin{equation}
\| f(X,\tilde{Y})] \|\leq K \| \tilde{Y} \|^2 \| X \|^{k-2} \leq K \| Y \|^2 \| X \|^{k-2} =:\tilde{Z}
\end{equation}
provided that  $\| \tilde{Y} \| \leq  \| Y \|$ and therefore the bound in (\ref{bound:2}) holds. Again using (\ref{bound:f}) it follows that $\| \E[f(X,y)] \|\leq K \| y \|^2 \E[ \| X \|^{k-2}]$ and 
$$
\| \E[f(X,y)]_{y=\tilde{Y}}  \| \leq K \| \tilde{Y} \|^2 \E[ \| X \|^{k-2}] \leq  K \| Y \|^2 \E[ \| X \|^{k-2}]=:Z 
$$
provided that $\| \tilde{Y} \| \leq \| Y \| $ and the bound in (\ref{bound:1}) holds. 
Moreover, appealing to the fact that $\mathcal L_k\in L_k(B)$ and Bochner's inequality together with the finiteness of all moments of
$\|X\|$, we find by using similar arguments as above that $y\mapsto \E[f(X,y)]$ is locally Lipschitz continuous.
Thus we can apply Proposition~\ref{feezing:lemma} in Appendix~\ref{sec:cond:exp} and conclude that
\begin{align*}
\mathcal L_{2,1}(X^{\parallel}(s;t),X^{\parallel}(s;t))&= \mathbb E[\mathcal{L}_k(y,y,X^{\perp}(s;t), \dots,X^{\perp}(s;t)) ] _{y=X^{\parallel}(s;t)}\\
&=\mathbb E[\mathcal{L}_k(X^{\parallel}(s;t),X^{\parallel}(s;t),X^{\perp}(s;t), \dots,X^{\perp}(s;t))  \,|\,\mathcal F_s].
\end{align*}
By linearity of the expectation operator along with multilinearity of $\mathcal L_k$, the function $\mathcal L_{2,1}(y_1,y_2)$ is bilinear and indeed an element of $L_2(B)$. The same argument applies to the other $\binom{k}{2}-1$ terms 
$\mathcal L_{2,2},\dots , \mathcal L_{2,\binom{k}{2}}$ with $X^{\parallel}(s;t)$ appearing twice. Since the sum of bilinear maps is bilinear we can define the bilinear function:
$$
\widetilde{\mathcal{L}}_2 (y_1,y_2)=\sum_{i=1}^{\binom{k}{2}} \mathcal L_{2,i}(y_1,y_2)
$$
and $\overline{\mathcal{M}}_2(y):=\widetilde{\mathcal{L}}_2 (y,y)\in M_2(B) $. In the same way the other functions $\overline{\mathcal{A}}_j(X^{\parallel}(s;t))$ can be defined for $j\in \{1,3,\dots,k\}$ and the representation \eqref{cond:exp:multi} follows. Thus, the theorem is proved.
\end{proof}

We next observe that elements of $M_k(B)$ share similar characteristics as the monomials on the real line. In fact their $(k+1)$-th Fr\'echet derivative vanishes.

\begin{proposition}\label{prop:frechet:der}
Assume for $k\in\mathbb N$ that $\mathcal M_k\in M_k(B)$ is induced by $\mathcal L_k\in L_k(B)$. 
Then the $n$-th Fr\'echet derivative $D^n\mathcal{M}_k :B\rightarrow L_n (B) $ is given by
\begin{equation}\label{derivative:k:maps}
D^n\mathcal{M}_k (u)(h_1,\dots,h_n)=\sum_{ \substack{ x_i\in \{u, h_1,\dots,h_n \} \\  \# \{i  \,|\, x_i =h_j \}=1 \\ 1\leq j\leq n}} \mathcal{L}_k (x_1,\dots, x_k)
\end{equation}
for $n=1,\dots,k$ and $D^{k+1} \mathcal{M}_k (u)(h_1,\dots,h_{k+1})=0$.
\end{proposition}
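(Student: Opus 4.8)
The plan is to argue by induction on $n$, using in each step the same elementary expansion of a bounded multilinear map that appeared in the proof of Proposition~\ref{prop:local-lip-mono}.

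\emph{Base case $n=1$.} Fix $u,h\in B$. By multilinearity of $\mathcal L_k$,
\[
\mathcal M_k(u+h)-\mathcal M_k(u)=\mathcal L_k(u+h,\dots,u+h)-\mathcal L_k(u,\dots,u)=\sum_{\emptyset\neq S\subseteq\{1,\dots,k\}}\mathcal L_k(z^S_1,\dots,z^S_k),
\]
where $z^S_i=h$ if $i\in S$ and $z^S_i=u$ otherwise. The sum of the $k$ terms with $|S|=1$ is exactly the right-hand side of \eqref{derivative:k:maps} for $n=1$; it is linear and bounded in $h$, hence an element of $L_1(B)=L(B)$. Each remaining term has $|S|\geq 2$, so by boundedness of $\mathcal L_k$ its norm is at most $K\|u\|^{k-|S|}\|h\|^{|S|}\leq C(\|u\|)\|h\|^2$ for $\|h\|\leq 1$; summing, the remainder is $o(\|h\|)$. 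Hence $\mathcal M_k$ is Fréchet differentiable with $D\mathcal M_k(u)$ given by \eqref{derivative:k:maps} at $n=1$.

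\emph{Inductive step.} Suppose \eqref{derivative:k:maps} holds for some $1\leq n<k$. Every summand of $D^n\mathcal M_k(u)(h_1,\dots,h_n)$ has the form $\mathcal L_k(x_1,\dots,x_k)$ where $n$ of the slots carry the fixed, pairwise distinct vectors $h_1,\dots,h_n$ and the remaining $k-n$ slots carry $u$; fixing the $h_j$ and varying $u$, this is (up to a relabelling of slots) a $(k-n)$-monomial in $u$. Applying the base-case expansion only to the $k-n$ slots occupied by $u$ shows it is Fréchet differentiable, with derivative in direction $h_{n+1}$ obtained by replacing, one at a time, each of the $k-n$ copies of $u$ by $h_{n+1}$ and summing. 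Doing this to every summand of $D^n\mathcal M_k(u)$ and collecting terms produces precisely the sum in \eqref{derivative:k:maps} with $n$ replaced by $n+1$: the tuples obtained are exactly those $(x_1,\dots,x_k)$ in which each of $h_1,\dots,h_{n+1}$ occurs exactly once, and the assignment (summand of $D^n$, replaced position) $\mapsto$ (summand of $D^{n+1}$) is a bijection, so no tuple is over- or under-counted. The resulting expression is $(n+1)$-linear and bounded in $(h_1,\dots,h_{n+1})$, so $D^{n+1}\mathcal M_k(u)\in L_{n+1}(B)$. Since the remainder estimate is uniform over $h_1,\dots,h_n$ in the unit ball (again by boundedness of $\mathcal L_k$), differentiating the $L_n(B)$-valued map $u\mapsto D^n\mathcal M_k(u)$ in operator norm causes no difficulty.

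\emph{Terminal case.} Taking $n=k$ in \eqref{derivative:k:maps}, every slot is occupied by one of $h_1,\dots,h_k$, so $D^k\mathcal M_k(u)(h_1,\dots,h_k)=\sum_{\sigma\in S_k}\mathcal L_k(h_{\sigma(1)},\dots,h_{\sigma(k)})$, which is independent of $u$; a constant map has vanishing derivative, whence $D^{k+1}\mathcal M_k(u)=0$ for all $u$. The only genuinely nontrivial point is the combinatorial bookkeeping in the inductive step — verifying that differentiating the $n$-th formula reproduces the $(n+1)$-th one exactly — while the analytic content is the boundedness estimate already used for Proposition~\ref{prop:local-lip-mono}.
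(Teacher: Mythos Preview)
Your proof is correct and follows essentially the same inductive scheme as the paper: establish \eqref{derivative:k:maps} for $n=1$, then differentiate each summand slot by slot to pass from $n$ to $n+1$, and finally observe that $D^k\mathcal M_k(u)$ is constant in $u$. The only difference is presentational: the paper computes the first derivative via the chain rule and partial derivatives $\nabla_i\mathcal L_k$ (which are exact, with zero remainder), whereas you expand $\mathcal M_k(u+h)-\mathcal M_k(u)$ over all nonempty subsets and bound the higher-order terms explicitly by $K\|u\|^{k-|S|}\|h\|^{|S|}$; your version is slightly more self-contained and also makes explicit the uniformity needed to differentiate $u\mapsto D^n\mathcal M_k(u)$ in operator norm, which the paper leaves implicit.
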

\begin{proof}
We have $\mathcal{M}_k(u) = \mathcal{L}_k (u,\dots,u)$. Using the chain rule for the Fr\'echet derivative we then get that 
$$
D\mathcal{M}_k (u) = \sum_{i=1}^k \nabla_i   \mathcal{L}_k (u,\dots,u)\cdot 1.
$$
To calculate $\nabla_i   \mathcal{L}_k (u,\dots,u)$, observe that since $\mathcal{L}_k (u + h_1,\dots,u) - \mathcal{L}_k (u ,\dots,u) - \mathcal{L}_k (h_1 ,\dots,u) =0 $ by multilinearity and therefore by the definition of the Fr\'echet derivative
\begin{equation}
\lim_{\| h_1 \| \rightarrow 0} \frac{ \| \mathcal{L}_k (u + h_1,u,\dots,u) - \mathcal{L}_k (u ,u,\dots,u) - \mathcal{L}_k (h_1 ,u,\dots,u)  \| }{\| h_1 \|}=0.
\end{equation}
Hence, $\nabla_1   \mathcal{L}_k (u,\dots,u)(h_1)=\mathcal{L}_k (h_1 ,u,\dots,u)$, and more generally $\nabla_i   \mathcal{L}_k (u,\dots,u)(h_1)=\mathcal{L}_k (u ,\dots,u,h_1,u, \dots,u)$, where the entry $h_1$ is in the $i$-th coordinate. It follows that, 
$$
D\mathcal{M}_k (u)(h_1)=\sum_{ \substack{ x_i\in \{u, h_1 \} \\  \# \{i  \,|\, x_i =h_1 \}=1}} \mathcal{L}_k (x_1,\dots, x_k).
$$
Clearly  $D\mathcal{M}_k $ maps from $B$ to $L_1(B)=L(B)$.

The claim now follows by induction: assume that \eqref{derivative:k:maps} holds for $n<k$. We pick the term $\mathcal{L}_k (u,\dots,u,h_1,\dots,h_n)$ from the sum in \eqref{derivative:k:maps}. Then 
\begin{align*}
D \mathcal{L}_k (u,\dots,u,h_1,\dots,h_n) (h_{n+1})&=\sum_{i=1}^{k-n} \nabla_i \mathcal{L}_k (u,\dots,u,h_1,\dots,h_n)(h_{n+1})\\
&= \mathcal{L}_k (h_{n+1},u,\dots, u,h_1,\dots,h_n) \\
&\qquad+ \mathcal{L}_k (u,h_{n+1},u,\dots, u,h_1,\dots,h_n) \\
&\qquad + \cdots \\
&\qquad+ \mathcal{L}_k (u,\dots, u,h_{n+1},h_1,\dots,h_n)
\end{align*}
and similarly with all the other terms in \eqref{derivative:k:maps}. We can then compute $D D^n \mathcal{M}_k = D^{n+1}\mathcal{L}_k$,
from which the representation \eqref{derivative:k:maps} follows for $n+1$. 
Directly from this representation one observes that $D^{n+1}\mathcal{M}_k$ can be seen as a map from $B$ to $L_{n+1}(B)$. 
Finally, $D^k\mathcal{M}_k(u)$ is constant and therefore $D^{k+1}\mathcal{M}_k(u)=0$.
\end{proof}
In view of this result, it is fair to call the elements in $M_k(B)$ {\it monomials}, as we have done.

\begin{corollary}
Assume for $k\in\mathbb N$ that $\mathcal M_k\in M_k(B)$ is induced by $\mathcal L_k\in L_k(B)$. Then the $n$-th Fr\'echet derivative $D^n\mathcal{M}_k :B\rightarrow L_n (B)$ is symmetric for any $2\leq n\leq k$, i.e.
\begin{equation}
D^n\mathcal{M}_k (u)(h_1,\dots,h_n)=D^n\mathcal{M}_k (u)(h_{\sigma (1)},\dots,h_{\sigma (n)})
\end{equation}
for any permutation $\sigma \in S_n$, where $S_n$ denotes the set of permutations on $\{ 1, \dots , n \}$.
\end{corollary}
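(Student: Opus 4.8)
The plan is to read off the symmetry directly from the closed‑form expression for the higher derivatives obtained in Proposition~\ref{prop:frechet:der}. By that proposition, for $2\leq n\leq k$ and $u,h_1,\dots,h_n\in B$ we have
\begin{equation*}
D^n\mathcal{M}_k (u)(h_1,\dots,h_n)=\sum_{ \substack{ x_i\in \{u, h_1,\dots,h_n \} \\  \# \{i  \,|\, x_i =h_j \}=1 \\ 1\leq j\leq n}} \mathcal{L}_k (x_1,\dots, x_k),
\end{equation*}
i.e.\ the sum ranges over all tuples $(x_1,\dots,x_k)$ in which each of the symbols $h_1,\dots,h_n$ occupies exactly one coordinate (necessarily a distinct one, as there are $n$ of them) and the remaining $k-n$ coordinates are filled by $u$. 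Equivalently, the sum is indexed by the set of injections from $\{1,\dots,n\}$ into $\{1,\dots,k\}$, the chosen injection recording in which slot each $h_j$ sits.

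First I would fix a permutation $\sigma\in S_n$ and substitute the arguments $h_{\sigma(1)},\dots,h_{\sigma(n)}$ into the formula, obtaining
\begin{equation*}
D^n\mathcal{M}_k (u)(h_{\sigma(1)},\dots,h_{\sigma(n)})=\sum_{ \substack{ x_i\in \{u, h_{\sigma(1)},\dots,h_{\sigma(n)} \} \\  \# \{i  \,|\, x_i =h_{\sigma(j)} \}=1 \\ 1\leq j\leq n}} \mathcal{L}_k (x_1,\dots, x_k).
\end{equation*}
The key observation is that, since $\sigma$ is a bijection of $\{1,\dots,n\}$, the constraint ``$\#\{i\mid x_i=h_{\sigma(j)}\}=1$ for all $1\leq j\leq n$'' is exactly the same constraint as ``$\#\{i\mid x_i=h_\ell\}=1$ for all $1\leq \ell\leq n$'', and likewise the value set $\{u,h_{\sigma(1)},\dots,h_{\sigma(n)}\}$ equals $\{u,h_1,\dots,h_n\}$. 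Hence the two displayed sums range over literally the same collection of tuples $(x_1,\dots,x_k)$ and add up the same elements $\mathcal{L}_k(x_1,\dots,x_k)$; they are therefore equal. This yields $D^n\mathcal{M}_k (u)(h_1,\dots,h_n)=D^n\mathcal{M}_k (u)(h_{\sigma(1)},\dots,h_{\sigma(n)})$ for every $\sigma\in S_n$, which is the claim.

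There is essentially no obstacle here beyond bookkeeping: the only point requiring a little care is to articulate precisely that permuting the $h_j$'s merely relabels the index set of the sum without changing the set of terms, and that this is valid for all $u$ and all $h_1,\dots,h_n$ (so the identity holds at the level of the $n$‑linear map $D^n\mathcal{M}_k(u)\in L_n(B)$, not just pointwise for one specific tuple). As an alternative, one could instead invoke the classical theorem on symmetry of higher Fréchet derivatives of maps between Banach spaces, which applies since $\mathcal{M}_k$ is infinitely Fréchet differentiable with continuous (indeed polynomial) derivatives by Proposition~\ref{prop:frechet:der}; but the combinatorial argument above is self‑contained and arguably more transparent in this setting.
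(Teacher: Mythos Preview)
Your proof is correct and follows essentially the same approach as the paper: both arguments read off the symmetry directly from the explicit formula of Proposition~\ref{prop:frechet:der}. The paper first groups the terms according to the fixed positions occupied by $u$ and then notes that each resulting block is a sum over $S_n$ (hence symmetric), whereas you observe in one step that permuting the $h_j$'s leaves the entire index set of tuples unchanged; this is the same combinatorial fact, packaged slightly more directly.
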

\begin{proof} 
We can rewrite \eqref{derivative:k:maps} as a double sum where we first fix the appearance of the $u$ and then sum over those terms with $u$ in the same coordinate. Again for notational simplicity we look at the specific one with the $u$ fixed to be in the first $k-n$ coordinates and we find that
\begin{align*}
&\sum_{ \substack{ x_i\in \{ h_1,\dots,h_n \} \\  \# \{i  \,|\, x_i =h_j \}=1\\1\leq j \leq n}} \mathcal{L}_k (u,\dots,u,x_1,\dots, x_n) \\
&\qquad=\sum_{\sigma \in S_n} \mathcal{L}_k (u,\dots,u,h_{\sigma (1)},\dots, h_{\sigma (n)})
\end{align*}
and therefore the expression is symmetric. The same argument works for any fixed positions for the $u$'s and there are $\binom{k}{n}$ possible ways to fix them. Therefore $D^n\mathcal{M}_k (u)(h_1,\dots,h_n)$ is the sum of $\binom{k}{n}$ symmetric functions and is therefore itself symmetric. 
\end{proof}

We immediately get the following Corollary which will be important later for polynomials in Banach algebras.
\begin{corollary}\label{cor:der:poly}
Let $B$ be a Banach algebra and $\mathcal L \in L(B)$. Define the $k$-th order monomial $\mathcal M_k(u):=\mathcal L (u^k)$. For $n\leq k$, the $n$-th order Fr\'echet derivative $D^n\mathcal M_k :B\rightarrow L_n(B) $ of $\mathcal M_k$ is given by
\begin{equation}\label{der:poly}
D^n\mathcal M_k(u)(h_1,\dots,h_n)=\mathcal L  \left( \sum_{ \substack{ x_i\in \{u, h_1,\dots,h_n \} \\  \# \{i  \,|\, x_i =h_j \}=1\\1\leq j \leq n}}  x_1\cdot \dots \cdot x_k \right).
\end{equation}
Furthermore if $B$ is commutative, then the expression simplifies to 
\begin{equation}\label{der:poly:commute}
D^n \mathcal M_k(u)(h_1,\dots,h_n)=\frac{k!}{(k-n)!}  \mathcal L  \left( h_1 \cdots h_n u^{k-n}  \right).
\end{equation}
\end{corollary}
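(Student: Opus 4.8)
The plan is to recognise this statement as a direct specialisation of Proposition~\ref{prop:frechet:der}. First I would introduce the map $\mathcal L_k : B^k \rightarrow B$ defined by $\mathcal L_k(x_1,\dots,x_k) := \mathcal L(x_1\cdots x_k)$, the product being taken in the Banach algebra $B$. This map is $k$-linear because multiplication in $B$ is bilinear (so the product of $k$ factors is linear in each argument) and $\mathcal L$ is linear; it is bounded since, writing $C$ for the operator norm of $\mathcal L$ and using submultiplicativity of the Banach algebra norm, $\|\mathcal L_k(x_1,\dots,x_k)\| \le C\|x_1\cdots x_k\| \le C\|x_1\|\cdots\|x_k\|$. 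Hence $\mathcal L_k \in L_k(B)$ and it induces exactly the monomial $\mathcal M_k(u) = \mathcal L_k(u,\dots,u) = \mathcal L(u^k)$. Substituting this particular $\mathcal L_k$ into the formula \eqref{derivative:k:maps} of Proposition~\ref{prop:frechet:der} and pulling the linear map $\mathcal L$ out of the (finite) sum yields \eqref{der:poly} at once.

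For the commutative case, I would argue that when $B$ is commutative each summand $x_1\cdots x_k$ on the right-hand side of \eqref{der:poly} is independent of the order of its factors; since exactly $n$ of the $x_i$ are the distinct elements $h_1,\dots,h_n$ (one of each) and the remaining $k-n$ equal $u$, every such product equals $h_1\cdots h_n\, u^{k-n}$. It then remains to count the index tuples $(x_1,\dots,x_k)$ occurring in the sum: such a tuple is determined by choosing the set of $n$ coordinates among $\{1,\dots,k\}$ at which the $h_j$'s sit and a bijection from $\{h_1,\dots,h_n\}$ onto those coordinates, giving $\binom{k}{n}$ choices for the set and $n!$ for the bijection, hence $\binom{k}{n}\,n! = k!/(k-n)!$ terms. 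Multiplying through and using linearity of $\mathcal L$ produces \eqref{der:poly:commute}.

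I do not expect any genuine obstacle here: the statement is a bookkeeping corollary of Proposition~\ref{prop:frechet:der}. The only points requiring a little care are verifying that the multiplication-induced map $\mathcal L_k$ indeed lies in $L_k(B)$, so that the proposition is applicable, and correctly enumerating the terms of the sum in the commutative reduction; both are routine.
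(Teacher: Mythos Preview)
Your proposal is correct and follows essentially the same route as the paper: the paper also observes that $\mathcal M_k$ is induced by the $k$-linear map $(u_1,\dots,u_k)\mapsto \mathcal L(u_1\cdots u_k)$, invokes Proposition~\ref{prop:frechet:der} to obtain \eqref{der:poly}, and then counts the $\binom{k}{n}\cdot n! = k!/(k-n)!$ equal terms in the commutative case exactly as you do. Your version is slightly more explicit in checking that $\mathcal L_k\in L_k(B)$, but otherwise the arguments coincide.
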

\begin{proof}
The monomial $\mathcal M_k$ is induced from the multilinear map $\mathcal{L}_k : (u_1,\dots , u_k) \rightarrow \mathcal L(u_1\cdot \dots \cdot u_k)$. Then \eqref{der:poly} directly follows form Proposition~\ref{prop:frechet:der}.
If $B$ is commutative, then all terms appearing in the sum in \eqref{der:poly} are equal. In fact there are $\binom{k}{n}$ ways to fix the appearance of the $u$ and then $n!$ ways to distribute the $h_1,\dots, h_n$ in the remaining positions. So altogether there are $\frac{k!}{(k-n)!}$ equal terms and \eqref{der:poly:commute} follows.
\end{proof}

\subsection{Multilinear forms}
In this section we shall elaborate a bit on multilinear forms which map into the field $\mathbb{F}$ instead of the Banach space $B$. We first give a precise definition:

\begin{definition}
\label{def:k-maps}
We say that $\mathcal{L}_k : B^k \rightarrow \mathbb{F}$ for $k\in\mathbb N$ is a {\tt $k$-linear form} if it is linear in each argument in the sense that for any
$x_1,x_2,\ldots,x_k,y\in B$ and $a,b\in\mathbb F$ 
\begin{align*}
\mathcal{L}_k (x_1,\dots,x_{j-1},&ax_j +by ,x_{j+1},\dots , x_k) \\ &=a \mathcal{L}_k (x_1,\ldots, x_k) + b \mathcal{L}_k (x_1,\dots,x_{j-1},y ,x_{j+1},\dots, x_k)
\end{align*}
for each $j=1,\ldots,k$. A $k$-linear form $\mathcal L_k$ is {\tt bounded} if there exists a constant $K>0$ such that 
$$
\abs{\mathcal L_k(x_1,\ldots,x_k)}\leq K\|x_1\|\cdots\|x_k\|
$$
for all $x_1,\ldots,x_k\in B$. We denote the space of bounded $k$-linear forms by $L^{\mathbb{F}}_k (B)$. 
\end{definition}
Notice that $L^{\mathbb{F}}_1(B)$ is the dual space of $B$. 
A $k$-linear form $\mathcal{L}_k$ induces a {\it $k$-monomial} $\mathcal{M}_k: B \rightarrow \mathbb{F}$ by
\begin{equation}
\mathcal{M}_k(x):=\mathcal{L}_k (x,\dots,x) .
\end{equation}
If $\mathcal L_k\in L^{\mathbb{F}}_k(B)$, we see that $\abs{\mathcal{M}_k(x)} \leq K\|x\|^k$, and we denote the set of all such $k$-monomials
by $M^{\mathbb{F}}_k(B)$. We use the convention that $M^{\mathbb{F}}_0(B)=\mathbb{F}$. Observe that $M^{\mathbb{F}}_1(B)=L^{\mathbb{F}}_1(B)$.

We show that the multilinearity preserving property implies that also monomials arising from multilinear forms are preserved. We use this result in Section~\ref{sec:applic} for the calculation of conditional moments for Hilbert space valued stochastic processes but the result might be of independent interest.
\begin{proposition}\label{prop:form}
Let $B$ be a Hilbert space with inner product $\langle\cdot,\cdot\rangle$. Let $(X(t))_{t\geq0}$ be a multilinear $B$-valued process with respect to the family of $B$-valued random variables 
$(X(s;t))_{0\leq s\leq t<\infty}$. For every $k$-monomial $\mathcal{M}_k\in M^{\mathbb{F}}_k(B)$, there exist j-monomials  $\overline{\mathcal{M}}_j \in M^{\mathbb{F}}_j(B),j=0,\dots,k$ such that 
\begin{equation*}
\E [\mathcal{M}_k(X(t)) \,|\,\mathcal F_s] =  \sum_{j=0}^k \overline{\mathcal{M}}_j ( X(s;t)).
\end{equation*}
\end{proposition}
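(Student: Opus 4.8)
The plan is to reduce this scalar-valued statement to the $B$-valued multilinearity property of Definition~\ref{Multi:process} by lifting the $k$-linear form to a $k$-linear \emph{map} along a fixed direction and then projecting back. We may assume $B\neq\{0\}$ (the claim being trivial otherwise) and fix a unit vector $e\in B$. Given $\mathcal{M}_k\in M^{\mathbb{F}}_k(B)$ induced by $\mathcal{L}_k\in L^{\mathbb{F}}_k(B)$, set $\mathcal{L}^e_k(x_1,\dots,x_k):=\mathcal{L}_k(x_1,\dots,x_k)\,e$. This map is $k$-linear and $\|\mathcal{L}^e_k(x_1,\dots,x_k)\|=|\mathcal{L}_k(x_1,\dots,x_k)|\leq K\|x_1\|\cdots\|x_k\|$, so $\mathcal{L}^e_k\in L_k(B)$, and the monomial it induces satisfies $\mathcal{M}^e_k(x)=\mathcal{M}_k(x)\,e\in B$.

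Next I would use the elementary identity
\[
\E[\mathcal{M}_k(X(t))\,e \,|\, \mathcal{F}_s] = \E[\mathcal{M}_k(X(t)) \,|\, \mathcal{F}_s]\,e ,
\]
which holds because Bochner conditional expectation commutes with the bounded linear map $\lambda\mapsto\lambda e$ from $\mathbb{F}$ to $B$; note that $\|\mathcal{M}^e_k(X(t))\|=|\mathcal{M}_k(X(t))|$, so the integrability making both sides meaningful is the same, and it is implicit in $X$ being a multilinear process. Since $\mathcal{M}^e_k\in M_k(B)$ and $(X(t))_{t\geq0}$ is a multilinear process with respect to $(X(s;t))_{0\leq s\leq t<\infty}$, Definition~\ref{Multi:process} furnishes monomials $\overline{\mathcal{M}}^e_j\in M_j(B)$, $0\leq j\leq k$, with $\E[\mathcal{M}^e_k(X(t)) \,|\, \mathcal{F}_s]=\sum_{j=0}^k\overline{\mathcal{M}}^e_j(X(s;t))$. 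Combining the two displays gives
\[
\E[\mathcal{M}_k(X(t)) \,|\, \mathcal{F}_s]\,e = \sum_{j=0}^k\overline{\mathcal{M}}^e_j(X(s;t)) .
\]

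To conclude, I would take the inner product of both sides with $e$ — a purely algebraic step — and use $\langle e,e\rangle=1$ to obtain $\E[\mathcal{M}_k(X(t)) \,|\, \mathcal{F}_s]=\sum_{j=0}^k\langle\overline{\mathcal{M}}^e_j(X(s;t)),e\rangle$. It then remains to verify that $\overline{\mathcal{M}}_j(y):=\langle\overline{\mathcal{M}}^e_j(y),e\rangle$ defines an element of $M^{\mathbb{F}}_j(B)$. For $j\geq1$, picking $\overline{\mathcal{L}}^e_j\in L_j(B)$ with $\overline{\mathcal{M}}^e_j(y)=\overline{\mathcal{L}}^e_j(y,\dots,y)$, the form $\overline{\mathcal{L}}_j(y_1,\dots,y_j):=\langle\overline{\mathcal{L}}^e_j(y_1,\dots,y_j),e\rangle$ is $j$-linear and bounded with the same constant as $\overline{\mathcal{L}}^e_j$ (since $\|e\|=1$), and it induces $\overline{\mathcal{M}}_j$; for $j=0$, $\overline{\mathcal{M}}^e_0\in M_0(B)=B$ is a constant, so $\overline{\mathcal{M}}_0=\langle\overline{\mathcal{M}}^e_0,e\rangle\in\mathbb{F}=M^{\mathbb{F}}_0(B)$. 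Strong $\mathcal{F}_s$-measurability of $X(s;t)$ together with continuity of the $\overline{\mathcal{M}}^e_j$ and of $\langle\cdot,e\rangle$ ensures the right-hand side is a genuine $\mathcal{F}_s$-measurable scalar random variable, and the asserted representation follows.

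There is no genuinely hard step here: the whole argument rests on the already-established $B$-valued property, and the only point requiring a little care is the passage between scalar- and $B$-valued (conditional) expectations, which is handled above through the fixed unit vector $e$ rather than through Riesz representation. I would also record the remark that the Hilbert-space hypothesis is not essential: any $\varphi\in B^{*}$ with $\varphi(e)=1$ — which exists in an arbitrary Banach space by Hahn--Banach — may replace $\langle\cdot,e\rangle$ throughout (and likewise covers the complex-scalar case), so the proposition holds verbatim for a general Banach space $B$.
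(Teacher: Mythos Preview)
Your argument is correct and is essentially the paper's own proof: both fix a unit vector, lift the scalar monomial $\mathcal{M}_k$ to the $B$-valued monomial $x\mapsto \mathcal{M}_k(x)\,e$, invoke the multilinearity property to obtain $B$-valued monomials $\overline{\mathcal{M}}^e_j$, and then pair with $e$ via the inner product to descend back to $\mathbb{F}$. Your closing remark that Hahn--Banach lets one replace $\langle\cdot,e\rangle$ by any $\varphi\in B^*$ with $\varphi(e)=1$, thereby removing the Hilbert-space hypothesis, is a genuine (if minor) strengthening not noted in the paper.
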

\begin{proof}
Let $\mathcal{M}_k\in M^{\mathbb{F}}_k(B)$. Choose $z\in B$ with $\|z\|_B=1$. Define the $k$-monomial $\mathcal{M}^z_k \in M_k(B)$ by $\mathcal{M}^z_k (x) := z\mathcal{M}_k(x)$. Then there exist $\overline{\mathcal{M}}^z_j \in M_j(B)$ for $j=0,\dots, k$ such that
   $$ \E[ \mathcal{M}^z_k (X(t)) | \mathcal F_s ] = \sum_{j=0}^k \overline{\mathcal{M}}^z_j (X(s;t)). $$
 Clearly,  $\overline{\mathcal{M}}_j:=\langle \overline{\mathcal{M}}^z_j(x) , z\rangle$ defines an element in $M^{\mathbb{F}}_j(B)$. Now observe that
  \begin{align*}
    \E[ \mathcal{M}_k(X(t)) | \mathcal F_s] &= \langle  \E[ \mathcal{M}_k^z (X(t)) |\mathcal F_s ] , z \rangle \\
                  &= \langle \sum_{j=0}^k \overline{\mathcal{M}}^z_j (X(s;t)) , z \rangle \\
                  &= \sum_{j=0}^k \overline{\mathcal{M}}_j ( X(s;t)).
  \end{align*}

\end{proof}

\section{Multiplicative maps and polynomials}\label{sec:indep:incr}
We shall now focus on Banach spaces $B$ that are also Banach algebras. We recall that 
when $B$ is a Banach algebra, there is a multiplication operator $\cdot : B \times B \rightarrow B$ defined such that 
$(B, + , \cdot)$ is an associative $\mathbb{F}$-algebra and $\| x\cdot y \| \leq \| x \| \cdot \| y \| $ for any $x,y\in B$.

Suppose that $(X(t))_{t\geq 0}$ is an independent increment process in $B$ (see Definition~\ref{ass:process}), and recall
the decomposition $X(t)=X^{\perp}(s;t) + X^{\parallel}(s;t)$ for any $0\leq s\leq t$.
According to Theorem~\ref{prop:indep-incr-multi}, $(X(t))_{t\geq 0}$ is a multilinear process with respect to $(X^{\parallel}(s;t))_{0\leq s\leq t<\infty}$, i.e., for every $\mathcal{M}_k\in M_k(B), k\in\mathbb N$, there exist $\overline{\mathcal M}_j \in M_j(B), j=0,\dots, k$ 
such that 
\begin{equation}\label{multi:lin:rep}
\E [\mathcal{M}_k(X(t)) \,|\,\mathcal F_s] =  \sum_{j=0}^k \overline{\mathcal{M}}_j ( X^{\parallel} (s;t)),
\end{equation}
for all $0\leq s\leq t$. 
We now look at the particular case of $\mathcal{M}_k(x) =x^k$ 
and address the question under which conditions the induced $j$-order monomials $\overline{\mathcal{M}}_j$ in \eqref{multi:lin:rep} are of polynomial type as well.
\begin{lemma}
\label{lemma:indep-incr-monomial}
Assume that $B$ is a commutative Banach algebra. Then for all $0\leq s\leq t$ 
$$
\mathbb E[X^k(t)\,|\,\mathcal F_s]=q_k(X^{\parallel}(s;t))
$$
with
$$
q_k(x)=\sum_{n=0}^k\binom{k}{n}\mathbb E[(X^{\perp}(s;t))^{k-n}] x^n.
$$
Here, we use the convention that $x^0=1\in\mathbb F$, that is, the term $b_0x^0=b_0\in B$.
\end{lemma}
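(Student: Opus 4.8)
The plan is to expand $X(t)^k$ with the binomial theorem — which is available since the multiplication on $B$ is associative and commutative — and then take the conditional expectation term by term, using that $X^{\perp}(s;t)$ is independent of $\mathcal F_s$ while $X^{\parallel}(s;t)$ is strongly $\mathcal F_s$-measurable.

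First I would use the decomposition $X(t)=X^{\perp}(s;t)+X^{\parallel}(s;t)$ from Definition~\ref{ass:process} together with commutativity to write
\[
X(t)^k=\bigl(X^{\perp}(s;t)+X^{\parallel}(s;t)\bigr)^k=\sum_{n=0}^k\binom{k}{n}\bigl(X^{\perp}(s;t)\bigr)^{k-n}\bigl(X^{\parallel}(s;t)\bigr)^n .
\]
Property (3) of Definition~\ref{ass:process} gives $\E[\|X^{\perp}(s;t)\|^{k-n}]<\infty$ and $\E[\|X^{\parallel}(s;t)\|^{n}]<\infty$, and submultiplicativity of the algebra norm then makes each summand Bochner integrable, so the conditional expectation may be distributed over this finite sum.

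Next I would fix $n$ and compute $\E[(X^{\perp}(s;t))^{k-n}(X^{\parallel}(s;t))^n\mid\mathcal F_s]$ by a freezing argument. Set $Z:=(X^{\perp}(s;t))^{k-n}$, which is independent of $\mathcal F_s$, and $Y:=X^{\parallel}(s;t)$, which is strongly $\mathcal F_s$-measurable, and put $f(z,y):=z\cdot y^{\,n}$. For fixed $y$ the map $z\mapsto z\cdot y^{\,n}$ is a bounded linear operator (right multiplication by $y^{\,n}$), so $f(Z,y)$ is strongly measurable (Lemma~\ref{lemma:cont-preserve-measur}) and Bochner integrable with $\E[f(Z,y)]=\E[Z]\,y^{\,n}$; moreover $y\mapsto\E[Z]\,y^{\,n}$ is locally Lipschitz since $y\mapsto y^{\,n}$ is a monomial (Proposition~\ref{prop:local-lip-mono}), and the integrability bounds required are supplied exactly as in the proof of Theorem~\ref{prop:indep-incr-multi}. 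Proposition~\ref{feezing:lemma} then yields
\[
\E\bigl[(X^{\perp}(s;t))^{k-n}(X^{\parallel}(s;t))^n\,\big|\,\mathcal F_s\bigr]=\E\bigl[(X^{\perp}(s;t))^{k-n}\bigr]\,(X^{\parallel}(s;t))^n .
\]
Summing over $n$, with the endpoints $n=0$ and $n=k$ covered by the stated convention $x^0=1$, gives precisely $q_k(X^{\parallel}(s;t))$.

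The measurability and integrability checks are routine; the only delicate point is the interchange of conditional expectation with the algebra multiplication — "taking out" the $\mathcal F_s$-measurable factor $(X^{\parallel}(s;t))^n$ that multiplies the independent factor $(X^{\perp}(s;t))^{k-n}$ inside a Banach algebra — which is why the freezing result of Appendix~\ref{sec:cond:exp} is invoked rather than the elementary scalar argument. As an alternative route, the statement can be deduced directly from Theorem~\ref{prop:indep-incr-multi} applied to $\mathcal M_k(x)=x^k$, induced by $\mathcal L_k(x_1,\dots,x_k)=x_1\cdots x_k$: in the commutative case each of the $\binom{k}{n}$ terms in that proof containing $n$ occurrences of $X^{\parallel}(s;t)$ collapses to $(X^{\perp}(s;t))^{k-n}(X^{\parallel}(s;t))^n$, and keeping track of the resulting constants recovers the coefficients $\binom{k}{n}\E[(X^{\perp}(s;t))^{k-n}]$.
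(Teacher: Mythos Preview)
Your proof is correct and follows the same overall outline as the paper: expand via the binomial theorem (using commutativity), then compute each term by separating the $\mathcal F_s$-measurable and the independent factors. The difference lies in how you justify the key step
\[
\E\bigl[(X^{\perp}(s;t))^{k-n}(X^{\parallel}(s;t))^n\,\big|\,\mathcal F_s\bigr]=\E\bigl[(X^{\perp}(s;t))^{k-n}\bigr]\,(X^{\parallel}(s;t))^n.
\]
You invoke the freezing result, Proposition~\ref{feezing:lemma}, treating the product as $f(z,y)=z\cdot y^n$. The paper instead uses the factorization property of conditional expectation in a Banach algebra, Proposition~\ref{prop:fact-cond-expect}, to pull out the strongly $\mathcal F_s$-measurable factor $(X^{\parallel}(s;t))^n$, followed by Lemma~\ref{lemma:cond-expect-independence} to replace the remaining conditional expectation of the independent factor by an unconditional one. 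That route is more elementary here: it needs only the sub-multiplicativity of the norm and avoids verifying the continuity and domination hypotheses of the freezing lemma. Your route is of course also valid --- it is essentially a specialisation of the machinery behind Theorem~\ref{prop:indep-incr-multi}, as you note in your alternative argument --- but in the commutative algebra setting the direct ``take out what is known, then use independence'' argument is both shorter and the reason Proposition~\ref{prop:fact-cond-expect} is included in the appendix.
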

\begin{proof}
Let $k\in\mathbb N$. By the binomial formula, we find for $s\leq t$
$$
X^k(t)=(X^{\perp}(s;t)+X^{\parallel}(s;t))^k=\sum_{n=0}^k\binom{k}{n}(X^{\perp}(s;t))^{k-n}(X^{\parallel}(s;t))^n.
$$
We get by $\mathcal F_t$-adaptedness of the process and 
Lemma~\ref{lemma:product-measurability} (Appendix~\ref{sec:cond:exp})
that $(X^{\parallel}(s;t))^{n}$ is strongly $\mathcal F_s$-measurable for all $n\leq k$. 
From Proposition~\ref{prop:fact-cond-expect} (Appendix~\ref{sec:cond:exp}),
\begin{align*}
\mathbb E[X^k(t)\,|\,\mathcal{F}_s]&=\sum_{n=0}^k\binom{k}{n}\mathbb E\left[(X^{\perp}(s;t))^{k-n} (X^{\parallel}(s;t))^n \,|\,\mathcal{F}_s\right] \\
&=\sum_{n=0}^k\binom{k}{n}\mathbb E\left[(X^{\perp}(s;t))^{k-n}\,|\,\mathcal F_s\right](X^{\parallel}(s;t))^n \\
&=\sum_{n=0}^k\binom{k}{n}\mathbb E\left[(X^{\perp}(s;t))^{k-n}\right](X^{\parallel}(s;t))^n.
\end{align*}
In the last step, we used that $X^{\perp}(s;t)$ is independent of $\mathcal F_s$ and
in the first step Lemma~\ref{lemma:cond-expect-independence} (Appendix~\ref{sec:cond:exp}). Thus, the lemma follows.
\end{proof}

If $B$ is commutative, define a polynomial $p_k:B\rightarrow B$ of order $k\in\mathbb N$, as
\begin{equation}
p_k(x)=\sum_{n=0}^k b_nx^n\,,
\end{equation} 
where $(b_n)_{n=0}^k\subset B$ and with the convention that 
$x^0=1\in\mathbb F$, i.e., $b_0x^0=b_0\in B$.
If $x\in B$, we find
$$
\|p_k(x)\|\leq\sum_{n=0}^k\|b_n\| \|x\|^n<\infty
$$
by the triangle inequality and Banach algebra norm. We denote the space of polynomials in $B$ of order $k$ by
$\text{Pol}_k(B)$. If $B=\mathbb R$, $\text{Pol}_k(\mathbb R)$ is the space of polynomials on the real line of order $k$. 

From Lemma~\ref{lemma:indep-incr-monomial} it is simple to see that in a commutative Banach algebra 
$$
\E[p_k(X(t)) \,|\,\mathcal F_s]=\sum_{n=0}^kb_nq_n(X^{\parallel}(s;t))=\widetilde{q}_k(X^{\parallel}(s;t))
$$ 
for any $k\in\mathbb N$ and $0\leq s\leq t$, where $\widetilde{q}_k\in\text{Pol}_k(B)$ is given by
$$
\widetilde{q}_{k}(x)=\sum_{n=0}^kb_n\sum_{j=0}^n\binom{n}{j}\E[(X^{\perp}(s;t))^{n-j}] x^j.
$$
This motivates a definition of a {\it polynomial process} in 
a commutative Banach algebra $B$:
\begin{definition}\label{poly:process}
Let $(X(t))_{t\geq 0}$ be a $B$-valued stochastic process where $B$ is a Banach algebra. Furthermore, let 
$(X(s;t))_{0 \leq s \leq t < \infty}$ be a family of $B$-valued random variables, such that $X(s;t)$ is strongly $\mathcal{F}_s$-measurable for
all $0\leq s\leq t$. The process $(X(t))_{t\geq 0}$ is said to be a
{\tt polynomial process} with respect to the family $(X(s;t))_{0 \leq s \leq t < \infty}$ if for all $s\leq t$, $k\in\mathbb N$ and
every polynomial $p_k\in\text{Pol}_k(B)$ there exists a polynomial
$q_m\in\text{Pol}_m(B)$, $m\leq k$ such that
$$
\mathbb E[p_k(X(t))\,|\,\mathcal{F}_s]=q_m(X(s;t)).
$$ 
\end{definition}  
Note that the coefficients of $q_m$ may depend on the times $s$ and $t$. 
We summarize our findings from above in the following Theorem.
\begin{theorem}\label{poly:preserv}
Assume that $B$ is a commutative Banach algebra. Then the independent increment process $(X(t))_{t\geq 0}$ defined in 
Definition~\ref{ass:process} is a polynomial process with respect to $(X^{\parallel}(s;t))_{0\leq s\leq t<\infty}$. 
\end{theorem}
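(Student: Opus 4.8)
The plan is to assemble the proof essentially from the pieces already established, since Theorem~\ref{poly:preserv} is a direct consequence of Lemma~\ref{lemma:indep-incr-monomial} together with linearity. First I would fix $k\in\mathbb N$ and an arbitrary polynomial $p_k\in\text{Pol}_k(B)$, writing $p_k(x)=\sum_{n=0}^k b_n x^n$ with coefficients $b_n\in B$. Applying the conditional expectation and using its linearity, I would reduce the computation of $\E[p_k(X(t))\,|\,\mathcal F_s]$ to computing $\E[b_n X^n(t)\,|\,\mathcal F_s]$ for each $n\le k$ separately. Here I would use that $b_n$ is a (deterministic) constant, so it factors out of the conditional expectation, and that $\E[X^n(t)\,|\,\mathcal F_s]=q_n(X^{\parallel}(s;t))$ by Lemma~\ref{lemma:indep-incr-monomial}, where $q_n(x)=\sum_{j=0}^n\binom{n}{j}\E[(X^{\perp}(s;t))^{n-j}]x^j$.

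Next I would collect the terms: summing over $n$ gives
\begin{equation*}
\E[p_k(X(t))\,|\,\mathcal F_s]=\sum_{n=0}^k b_n q_n(X^{\parallel}(s;t))=\widetilde q_k(X^{\parallel}(s;t)),
\end{equation*}
where $\widetilde q_k(x)=\sum_{n=0}^k b_n\sum_{j=0}^n\binom{n}{j}\E[(X^{\perp}(s;t))^{n-j}]x^j$. I would then verify that $\widetilde q_k$ is indeed an element of $\text{Pol}_k(B)$: it is a finite sum of terms of the form $c_{n,j}x^j$ with $j\le n\le k$, and each coefficient $c_{n,j}=b_n\binom{n}{j}\E[(X^{\perp}(s;t))^{n-j}]$ lies in $B$ because $B$ is a Banach algebra (so products of elements of $B$ stay in $B$) and because $\E[(X^{\perp}(s;t))^{n-j}]$ is a well-defined Bochner integral, finite by property (3) of Definition~\ref{ass:process} which guarantees all moments of $\|X^{\perp}(s;t)\|$ are integrable. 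Reindexing the double sum by the exponent $j$ collapses it to a genuine polynomial of order at most $k$ with $B$-valued coefficients depending on $s$ and $t$. This matches Definition~\ref{poly:process} with $m=k$ and $q_m=\widetilde q_k$.

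Since this is really just a restatement of the computation already carried out in the paragraph preceding Definition~\ref{poly:process}, the proof is short and there is no genuine obstacle; the only points requiring a word of care are the interchange of the finite sum with the conditional expectation (justified by linearity and the integrability just noted), the factoring out of the deterministic coefficients $b_n$, and the observation that $X^{\parallel}(s;t)$ and its powers are strongly $\mathcal F_s$-measurable so that $\widetilde q_k(X^{\parallel}(s;t))$ is a legitimate $\mathcal F_s$-measurable right-hand side. I would therefore present the argument in two sentences invoking Lemma~\ref{lemma:indep-incr-monomial} and linearity of conditional expectation, and close by noting that $\widetilde q_k\in\text{Pol}_k(B)$, which is exactly the assertion of Definition~\ref{poly:process}.
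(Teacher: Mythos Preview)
Your proposal is correct and matches the paper's approach exactly: the paper carries out precisely this computation (invoking Lemma~\ref{lemma:indep-incr-monomial} and linearity to obtain $\widetilde q_k$) in the paragraph immediately preceding Definition~\ref{poly:process}, and then states Theorem~\ref{poly:preserv} as a summary of those findings without further proof. The only small technical point you might make explicit is that factoring the deterministic $b_n$ out of the conditional expectation is an instance of Lemma~\ref{exchange:int:operator} (left multiplication by $b_n$ is a bounded linear operator on $B$), but the paper glosses over this too.
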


Observe that any $b\in B$ gives rise to a multiplication operator $B\ni x\mapsto bx\in B$, being a
bounded linear operator. Hence, the above definition of polynomials may be viewed as a special case of more
general polynomials with $b_n\in L(B)$, as we define next. 
Define a {\it generalized} polynomial $\mathcal P_k:B\rightarrow B$ of order $k\in\mathbb N$, as
\begin{equation}
\mathcal P_k(x)=\sum_{n=0}^k \mathcal B_n (x^n),
\end{equation} 
where $(\mathcal B_n)_{n=1}^k\subset L(B)$ and $\mathcal B_0\in B$ is a constant reflecting the fact that $x^0=1\in\mathbb F$. If $x\in B$, we find
$$
\|\mathcal P_k(x)\|\leq\sum_{n=0}^k\|\mathcal B_n\|_{\text{op}} \|x\|^n<\infty
$$
by the triangle inequality and Banach algebra norm. Here, $\|\mathcal B_n\|_{\text{op}}$ denotes the operator norm of 
$\mathcal B_n$. We denote the space of generalized polynomials in $B$ of order $k$ by
$\text{gPol}_k(B)$. If $B=\mathbb R$, $\text{gPol}_k(\mathbb R)=\text{Pol}_k(\mathbb R)$ is the space of polynomials on the real line of order $k$. 

We define the following 

\begin{definition}\label{gen:poly:process}
Let $(X(t))_{t\geq 0}$ be a $B$-valued stochastic process where $B$ is a Banach algebra. Furthermore, let 
$(X(s;t))_{0 \leq s \leq t < \infty}$ be a family of $B$-valued random variables, such that $X(s;t)$ is strongly $\mathcal{F}_s$-measurable for
all $0\leq s\leq t$. The process $(X(t))_{t\geq 0}$ is said to be a
{\tt generalized polynomial process} with respect to the family 
$(X(s;t))_{0 \leq s \leq t < \infty}$ if for all $s\leq t$, $k\in\mathbb N$ and
every generalized polynomial $\mathcal P_k\in\text{gPol}_k(B)$ there exists a generalized polynomial
$\mathcal Q_m\in\text{gPol}_m(B)$, $m\leq k$ such that
$$
\mathbb E[\mathcal P_k(X(t))\,|\,\mathcal{F}_s]=\mathcal Q_m(X(s;t)).
$$ 
\end{definition}  
Note that in Definition\ref{poly:process} and \ref{gen:poly:process} we do not assume that $B$ is commutative. The name {\it generalized polynomial process} is justified by the following Proposition which states that every polynomial 
 process is also a generalized polynomial process. 

\begin{proposition}\label{prop:genpoly:poly}
Assume $B$ is a Banach algebra, and $(X(t))_{t\geq 0}$ is a polynomial process in $B$ with respect to the family 
$(X(s;t))_{0 \leq s \leq t < \infty}$. Then $(X(t))_{t\geq 0}$ is also a generalized polynomial process in $B$ with respect to the family $(X(s;t))_{0 \leq s \leq t < \infty}$.
\end{proposition}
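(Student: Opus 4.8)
The plan is to break a generalised polynomial into its monomial pieces $\mathcal B_n(x^n)$, move the conditional expectation inside by linearity, pull each bounded linear operator $\mathcal B_n$ out of the conditional expectation, dispose of the resulting ordinary monomials $x\mapsto x^n$ with the polynomial-process hypothesis, and then reassemble everything into a generalised polynomial.

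Concretely, fix $0\le s\le t$, $k\in\mathbb N$ and $\mathcal P_k\in\text{gPol}_k(B)$, written as $\mathcal P_k(x)=\sum_{n=0}^k\mathcal B_n(x^n)$ with $\mathcal B_n\in L(B)$ for $1\le n\le k$ and $\mathcal B_0\in B$. First I would record that each $X^n(t)$ is Bochner integrable: this is implicit in $(X(t))_{t\ge0}$ being a polynomial process, on applying the defining property to the monomial $x\mapsto x^n$. Consequently $\mathcal B_n(X^n(t))$ is Bochner integrable too, since $\|\mathcal B_n(X^n(t))\|\le\|\mathcal B_n\|_{\text{op}}\|X^n(t)\|$, and linearity of the conditional expectation gives
\[
\E[\mathcal P_k(X(t))\,|\,\mathcal F_s]=\mathcal B_0+\sum_{n=1}^k\E[\mathcal B_n(X^n(t))\,|\,\mathcal F_s].
\]
Next I would invoke the fact that a deterministic bounded linear operator commutes with the Bochner conditional expectation, $\E[\mathcal B_n(X^n(t))\,|\,\mathcal F_s]=\mathcal B_n\bigl(\E[X^n(t)\,|\,\mathcal F_s]\bigr)$, and then use that $(X(t))_{t\ge0}$ is a polynomial process to write $\E[X^n(t)\,|\,\mathcal F_s]=\sum_{j=0}^{m_n}c^{(n)}_j\,X(s;t)^j$ for some $m_n\le n$ and $c^{(n)}_0,\dots,c^{(n)}_{m_n}\in B$, whence
\[
\E[\mathcal B_n(X^n(t))\,|\,\mathcal F_s]=\sum_{j=0}^{m_n}\mathcal B_n\bigl(c^{(n)}_j\,X(s;t)^j\bigr).
\]

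Finally I would identify the right-hand side as a generalised polynomial in $X(s;t)$: for $j\ge1$ the map $y\mapsto\mathcal B_n(c^{(n)}_j\,y)$ is left multiplication by $c^{(n)}_j$, a bounded operator of norm at most $\|c^{(n)}_j\|$, followed by $\mathcal B_n$, hence an element $\mathcal C^{(n)}_j\in L(B)$; and $\mathcal B_n(c^{(n)}_0)\in B$ is a constant. Setting $m:=\max_{1\le n\le k}m_n\le k$, $\mathcal D_j:=\sum_{n\,:\,m_n\ge j}\mathcal C^{(n)}_j\in L(B)$ for $1\le j\le m$, and $\mathcal D_0:=\mathcal B_0+\sum_{n=1}^k\mathcal B_n(c^{(n)}_0)\in B$, and collecting terms by the power of $X(s;t)$, one obtains
\[
\E[\mathcal P_k(X(t))\,|\,\mathcal F_s]=\sum_{j=0}^m\mathcal D_j\bigl(X(s;t)^j\bigr)=\mathcal Q_m(X(s;t)),
\]
where $\mathcal Q_m\in\text{gPol}_m(B)$ with $m\le k$; this is precisely the requirement of Definition~\ref{gen:poly:process}. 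The only step I expect to require care is the commutation identity $\E[\mathcal B(Y)\,|\,\mathcal G]=\mathcal B(\E[Y\,|\,\mathcal G])$ for $\mathcal B\in L(B)$ and Bochner-integrable $Y$; this is standard — verify it on $B$-valued simple functions, where it is immediate from linearity and the scalar case, and pass to the $L^1$-limit using $\|\mathcal B(\cdot)\|\le\|\mathcal B\|_{\text{op}}\|\cdot\|$, cf.\ \cite{prato.zabczyk.14} — so that the remainder of the argument is bookkeeping with the vector-space structure of $L(B)$ and $\text{gPol}_m(B)$.
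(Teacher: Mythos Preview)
Your proof is correct and follows essentially the same approach as the paper: decompose $\mathcal P_k$ into its monomial summands, commute each $\mathcal B_n$ with the conditional expectation via the standard lemma (the paper's Lemma~\ref{exchange:int:operator}), apply the polynomial-process hypothesis to $x\mapsto x^n$, and reassemble using the observation that $y\mapsto\mathcal B_n(c\,y)$ lies in $L(B)$. The only cosmetic differences are notational, together with your slightly sharper bound $m_n\le n$ in place of the paper's $m(n)\le k$.
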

\begin{proof}
For $k\in\mathbb N$, let $\mathcal P_k\in\text{gPol}_m(B)$ with \begin{equation}
\mathcal P_k(x)=\sum_{n=0}^k \mathcal B_n(x^n)\,.
\end{equation} 
Because $(X(t))_{t\geq 0}$ is a polynomial process with respect to $(X(s;t))_{0 \leq s \leq t < \infty}$, it follows that 
for each $0\leq n\leq k$ there exists
a $q_{m(n)}\in\text{Pol}_{m(n)}(B)$ for $m(n)\leq k$ such that for any $s\leq t$
\begin{equation}
\E [(X(t))^n\,|\,\mathcal{F}_s] = q_{m(n)}(X(s;t)) =\sum_{j=0}^{m(n)} b_{n,j}   (X(s;t))^j
\end{equation}
where $(b_{n,j})_{j=0,\dots,m(n)}\subset B$.
It follows using Lemma~\ref{exchange:int:operator} that 
\begin{align*}
\E [\mathcal B_n ( (X(t))^n ) \,|\,\mathcal{F}_s] &= \mathcal B_n (\E [ ( (X(t))^n ) \,|\,\mathcal{F}_s] ) \\
&= \mathcal B_n (\sum_{j=0}^{m(n)} b_{n,j} (X(s;t))^j)\\
&=  \sum_{j=0}^{m(n)} \mathcal B_n (b_{n,j}   (X(s;t))^j)\\
&= \sum_{j=0}^{m(n)} \widetilde{\mathcal B}_{n,j}   (X(s;t))^j
\end{align*}
with $\widetilde{\mathcal B}_{n,j} \in L(B)$ being defined by $B\ni x \mapsto \mathcal B_n (b_{n,j} x)\in B$. Define the 
generalized polynomial $\mathcal Q_{\widehat{m}}\in\text{gPol}_{\widehat{m}}(B)$ by $\mathcal Q_{\widehat{m}} (x) = \sum_{n=0}^k  \sum_{j=0}^{m(n)}
 \widetilde{\mathcal B}_{n,j} x^j$ with $\widehat{m}:=\max_{0\leq n\leq k} m(n)\leq k$. The result follows.
\end{proof}

To see that the opposite does not hold in general we return to Example~\ref{mild:sol:exampl:ind:Incr} and now assume that $B$ is a separable Hilbert space with a commutative algebra defined. 

As before we decompose $X(t)$ as defined in \eqref{stoch-conv} into $X^{\perp}(s;t) :=\int_s^t\mathcal S_{t-u}dW(u)$ and  $X^{\parallel}(s;t) :=\int_0^s\mathcal S_{t-u}dW(u)$ where $X^{\perp}(s;t)$ is independent of $\mathcal F_s$ and
$X^{\parallel}(s;t)$ is $\mathcal F_s$-measurable. Hence, $(X(t))_{t\geq 0}$ is an independent increment process
in $B$ and by Theorem~\ref{poly:preserv}, it holds for any $k\in\mathbb N$ and $s\leq t$
\begin{align*}
\E[X^k(t)\,|\,\mathcal F_s]&
=\sum_{n=0}^k\binom{k}{n}\E[(X^{\perp}(s;t))^{k-n}]
(X^{\parallel}(s;t))^n.
\end{align*}
Hence, as expected, $(X(t))_{t\geq 0}$ is a polynomial process with respect to the family $(X^{\parallel}(s;t))_{0\leq s\leq t<\infty}$. 
Let us analyse the situation a few steps further: From the semigroup property of $(\mathcal S_t)_{t\geq 0}$, we find that 
$$
X^{\parallel}(s;t)=\mathcal S_{t-s}\int_0^s\mathcal S_{s-u}dW(u)=\mathcal S_{t-s}X(s).
$$
Thus,
\begin{equation*}
\E[X^k(t)\,|\,\mathcal F_s]=\sum_{n=0}^k\binom{k}{n}\E[(X^{\perp}(s;t))^{k-n}]
(\mathcal S_{t-s}X(s))^n. 
\end{equation*}
Now, assume $(\mathcal S_t)_{t\geq 0}$ is an algebra homomorphism so that $\mathcal S_t(x\cdot y)=(\mathcal S_t x)\cdot(\mathcal S_t y)$ for any $x,y\in B$. Then $(\mathcal S_t x)^n=\mathcal S_t x^n$ for all $n\in\mathbb N$ and we find
\begin{equation}
\label{example-OU}
\E[X^k(t)\,|\,\mathcal F_s]=\sum_{n=0}^k\binom{k}{n}\E[(X^{\perp}(s;t))^{k-n}]\mathcal S_{t-s} X^n(s). 
\end{equation}
This shows that $(X(t))_{t\geq 0}$ is a {\it generalized} polynomial process with respect to the family 
$(X (s))_{0\leq s \leq t <\infty}$. This is in line with the definition of finite-dimensional polynomial processes (see 
Cuchiero et al.~\cite{CKT} and Filipovi\'c and Larsson \cite{LF}), and the fact that we can establish the generalized polynomial preserving property of the process
with respect to itself is significantly stronger and more applicable than merely in terms of some family of 
$\mathcal F_s$-measurable random variables.  
On the other hand, $(X(t))_{t\geq 0}$ is in general {\it not} a polynomial process with respect to $(X (s))_{0\leq s \leq t <\infty}$, as the coefficients 
on the right hand side of \eqref{example-OU} are elements in $L(B)$. In fact, when $B$ is a function space with multiplication defined point-wise and $\mathcal{S}_t$ is the shift operator (such an example is considered in more detail in Section~\ref{subsec:commodity}), then for instance $\E [X(t) \,|\,\mathcal F_s ] = \mathcal{S}_{t-s} X(s)= X(s) (t-s+ \cdot)$. However, if $(X(t))_{t\geq 0}$ is polynomial, then for every $y := t - s$ there must exist $a_y,b_y \in B$ such that $\E [X(t) \,|\,\mathcal F_s ] =X(s) (t-s+ \cdot) =X(s) (y+ \cdot) = a_y X(s)(\cdot) + b_y$. Since $\mathcal{S}_{t-s}$ is linear we may conclude that $b_y=0$. Then evaluating at $0$ leads in particular to $X(s)(y) = a_y (0)X(s)(0)$. This implies that $X(s)(\cdot)$ is measurable with respect to the sigma algebra $\sigma(X(s)(0))$. However, this cannot be the case unless the driving Wiener process $(W(t))_{t\geq 0}$ is one-dimensional. This provides us with an example of a process that is generalized polynomial but not polynomial.

It is worth emphasising that the above analysis shows that, in general, stochastic convolutions as in \eqref{stoch-conv} are
polynomial processes with respect to $(\mathcal S_{t-s}X(s))_{0\leq s\leq t<\infty}$. Indeed, they are generalized polynomial
processes with respect to $(X(s))_{0\leq s\leq t<\infty}$ when the semigroup is a homomorphism, but not necessarily polynomial with
respect to the same family. Hence, the extension of Ornstein-Uhlenbeck processes to infinite dimensions as in \eqref{inf-OU} 
is not straightforwardly preserving the natural polynomial property from the finite-dimensional case.

The class of processes defined in \eqref{stoch-conv} is of interest from the application point of view. Stochastic evolution equations
like the Ornstein-Uhlenbeck process in \eqref{inf-OU} appear in many applications, for example as the heat equation in random media
(see e.g. Walsh~\cite{W}) or as the dynamics of forward prices in finance and commodity markets (see e.g. Benth and Kr\"uhner~\cite{BK}). We return to the latter in the next Section.

\subsection{Counterexample: non-commutative case}
Consider the case when $B$ is a non-commutative Banach algebra. Then the binomial
formula used in the proof of Lemma~\ref{lemma:indep-incr-monomial} and later above does not hold. For example,
if $(X(t))_{t\geq 0}$ is an independent increment process in $B$, we find for $t\geq s$ that
\begin{align*}
\E[X^3(t)\,|\,\mathcal F_s]&=\E[(X^{\perp}(s;t))^3\,|\,\mathcal F_s]+
\E[(X^{\perp}(s;t))^2 X^{\parallel}(s;t) \,|\,\mathcal F_s] \\
&\quad+\E[(X^{\perp}(s;t)) X^{\parallel}(s;t) (X^{\perp}(s;t))\,|\,\mathcal F_s]+\E[(X^{\perp}(s;t)) (X^{\parallel}(s;t))^2 \,|\,\mathcal F_s] \\
&\quad+\E[X^{\parallel}(s;t) (X^{\perp}(s;t))^2\,|\,\mathcal F_s]+\E[X^{\parallel}(s;t)(X^{\perp}(s;t))X^{\parallel}(s;t) \,|\,\mathcal F_s] \\
&\quad+\E[(X^{\parallel}(s;t))^2(X^{\perp}(s;t))\,|\,\mathcal F_s]+\E[ (X^{\parallel}(s;t))^3 \,|\,\mathcal F_s] \\
&=\E[(X^{\perp}(s;t))^3]+E[(X^{\perp}(s;t))^2]X^{\parallel}(s;t) \\
&\quad+\E[(X^{\perp}(s;t))X^{\parallel}(s;t)(X^{\perp}(s;t))\,|\,\mathcal F_s] \\
&\quad+\E[(X^{\perp}(s;t))] (X^{\parallel}(s;t))^2+X^{\parallel}(s;t)\E[(X^{\perp}(s;t))^2] \\
&\quad+(X^{\parallel}(s;t))^2\E[(X^{\perp}(s;t))]+(X^{\parallel}(s;t))^3
\end{align*}
after appealing to independence and measurability using Lemmas~\ref{lemma:cond-expect-independence} 
and \ref{prop:fact-cond-expect} (Appendix~\ref{sec:cond:exp}). It is not immediately clear how to deal with the term involving the
conditional expectation of $X^{\perp}(s;t)X^{\parallel}(s;t)X^{\perp}(s;t)$, and thus how to express
$\E[X^3(t)\,|\,\mathcal F_s]$ as a polynomial in $X^{\parallel}(s;t)$. 

Using Proposition~\ref{feezing:lemma} (Appendix~\ref{sec:cond:exp}), we know that 
$$
\E[(X^{\perp}(s;t))X^{\parallel}(s;t)(X^{\perp}(s;t))\,|\,\mathcal F_s]=\E [(X^{\perp}(s;t)) y (X^{\perp}(s;t))  ]_{y=X^{\parallel}(s;t)}
$$ 
and observe that $B\ni y\mapsto\E [(X^{\perp}(s;t)) y (X^{\perp}(s;t))]\in B$ is a linear function. Furthermore, it is bounded as 
\begin{align*}
 \| \E [(X^{\perp}(s;t)) y (X^{\perp}(s;t))  ] \| & \leq \E [ \| (X^{\perp}(s;t)) y (X^{\perp}(s;t))  \| ] \\ 
& \leq \E [ \| (X^{\perp}(s;t)) \| \| y \| \| (X^{\perp}(s;t))  \| ] \\
&= \| y \| \E [ \| (X^{\perp}(s;t)) \|^2   ].
\end{align*}
Altogether this means that 
$$ 
\E [X^{\perp}(s;t)X^{\parallel}(s;t) X^{\perp}(s;t) \,|\,\mathcal{F}_s ]=\mathcal L (X^{\parallel}(s;t))
$$
for the bounded operator $\mathcal L \in L(B)$ defined by $y\mapsto \E [X^{\perp}(s;t)y X^{\perp}(s;t) ] $. To show that in fact $\mathcal L $ is not a (left)-multiplication operator we look at the vector space 
$\mathbb{R}^{2\times 2}$ of $2 \times 2$-matrices equipped with a sub-multiplicative matrix norm and the usual
matrix product. This space is well-known to be a non-commutative Banach algebra. 
Let now $(L_{ij}(t))_{t\geq 0}$ for $i,j=1,2$ be 4 independent copies of the real-valued 
L\'evy processes $(L(t))_{t\geq 0}$ with finite moments of all orders. Then
$$
X(t)=\begin{pmatrix}
L_{11}(t) & L_{12}(t) \\
L_{21}(t) & L_{22}(t)
\end{pmatrix},
$$
defines an independent increment process in the space of $2\times 2$-matrices. It follows that
$$
X^{\perp}(s;t)
=\begin{pmatrix}
\Delta_{s,t} L_{11} & \Delta_{s,t} L_{12} \\
\Delta_{s,t} L_{21} & \Delta_{s,t}L_{22}
\end{pmatrix}
$$
where $\Delta_{s,t}L_{ij}(t)=L_{ij}(t)-L_{ij}(s)$ for $s\leq t$. 
Choose now
$$
h:= \begin{pmatrix} 
0 & 1 \\
0 & 0 
\end{pmatrix}.
$$
We obtain that 
$$
\mathcal L (h)=\begin{pmatrix}
\E [\Delta_{s,t} L]^2 & \E [\Delta_{s,t} L]^2 \\
\E[ (\Delta_{s,t} L)^2 ] & \E [\Delta_{s,t} L]^2
\end{pmatrix}=\begin{pmatrix}
0 & 0 \\
1 & 0
\end{pmatrix}=:g
$$
if $L$ is such that $\E[L(t)] = 0$ and $s < t$ such that $\E [(\Delta_{s,t}L)^2]=1$, where $ \Delta_{s,t}L = L(t)-L(s) $. However, one easily verifies that no matrix $a \in \mathbb{R}^{2\times 2}$ exists with $ah=g$.

This shows that the independent increment process $(X(t))_{t\geq 0}$ is
{\it not} a polynomial process in a non-commutative Banach algebra $B$. This is very different from the commutative case, where we recall from Theorem~\ref{poly:preserv} that independent increment processes are polynomial processes.

Motivated by the above derivation, we may ask the question whether $(X(t))_{t\geq 0}$ is a {\it generalized} polynomial process. However, this is also not the case as can be seen by looking at $\E[X^5(t)\,|\,\mathcal F_s]$: Similar calculation as above yields one term of the form
\begin{align*}
 &  \E [X^{\perp}(s;t) X^{\parallel}(s;t) X^{\perp}(s;t)   X^{\parallel}(s;t) X^{\perp}(s;t) \,|\,\mathcal{F}_s ] \\
 &\qquad\qquad= \E [X^{\perp}(s;t) y X^{\perp}(s;t)   y X^{\perp}(s;t)  ]_{y=X^{\parallel}(s;t)}
\end{align*}
and the question is whether this expression can be written as $\mathcal L_2( (X^{\parallel}(s;t))^2 ) + \mathcal L_1( (X^{\parallel}(s;t)) ) + b$ for some $\mathcal L_1 , \mathcal L_2 \in L(B), b\in B$. Let us assume that this is indeed the case, that is, 
$$
 f(y):=\E [X^{\perp}(s;t) y X^{\perp}(s;t) y X^{\perp}(s;t)]=\mathcal L_2 (y^2) + \mathcal L_1 (y) +b.
$$
By Proposition~\ref{prop:frechet:der} we know that 
\begin{align*}
D^2 f(y)(h_1,h_2) &= \E [X^{\perp}(s;t) h_1 X^{\perp}(s;t) h_2 X^{\perp}(s;t)] \\
&\qquad+ \E [X^{\perp}(s;t) h_2 X^{\perp}(s;t) h_1 X^{\perp}(s;t)]
\end{align*}
and by Corollary~\ref{cor:der:poly} that
\begin{equation*}
D^2 ( \mathcal L_2 (y^2) ) (h_1,h_2) = \mathcal L_2 (h_1 h_2) + \mathcal L_2 (h_2 h_1).
\end{equation*}
and $D^2 ( \mathcal L_1  +b) =0$.
If  $f(y)= \mathcal L_2 (y^2) + \mathcal L_1 (y) +b$ then of course also their derivatives agree and
$$D^2 f(y)(h_1,h_2) = D^2 (\mathcal L_2 (y^2)) (h_1,h_2)$$
 for every $h_1,h_2 \in B$. 
 
 We now choose 
$$
h_1:= \begin{pmatrix} 
1 & 0 \\
0 & 0 
\end{pmatrix}
\quad 
h_2:= \begin{pmatrix} 
0 & 0 \\
0 & 1 
\end{pmatrix}
$$
and first observe that $h_1\cdot h_2 = h_2 \cdot h_1= {\bf 0}_2$ with ${\bf 0}_2$ being the $2\times 2$-matrix of zeros, then 
$D^2 (\mathcal L_2 (y^2)) (h_1,h_2)=\mathcal L_2 ({\bf 0}_2 ) + \mathcal L_2 ({\bf 0}_2 ) ={\bf 0}_2$ independent of the specification of 
$(X(t))_{t\geq 0}$.

 With the same choice for $(X(t))_{t\geq 0}$ as above we derive,
\begin{eqnarray}\nonumber 
&&D^2 f(y)(h_1,h_2)\\\nonumber 
&=&\begin{pmatrix} 2\E[\Delta_{s,t}L]^3 & \E[\Delta_{s,t}L]^3  + \E[(\Delta_{s,t}L)^2] \E[\Delta_{s,t}L] \\ \E[\Delta_{s,t}L]^3+ \E[(\Delta_{s,t}L)^2] \E[\Delta_{s,t}L] & 2\E[\Delta_{s,t}L]^3\end{pmatrix} \neq {\bf 0}_2
\end{eqnarray}
whenever $\E[L(t)]\neq 0$. Choosing now a real-valued L\'evy process with mean unequal to zero yields 
a contradiction to $D^2 (\mathcal L_2 (y^2)) (\bar{h}_1,\bar{h}_2)={\bf 0}_2$. So, in general independent increment processes fail to be even generalized polynomial processes in a non-commutative Banach algebra. This shows that even for general Banach algebras one must introduce monomials in $M_k(B)$ as the structure preserving class to extend the notion of "polynomial processes" 
to infinite dimensions, and not merely polynomials nor generalized polynomials.

\section{Applications}\label{sec:applic} In this section we want to elaborate a bit more on some of the possible applications. 
\subsection{Calculation of moments}
For multilinear processes in Hilbert space we can compute conditional moments of the norm of the process. To this end, suppose $B$ is a Hilbert space with inner product
$\langle\cdot,\cdot\rangle$.
Define for $k\in\mathbb N$,
$$
\mathcal L_{2k}(x_1,y_1,\ldots,x_k,y_k):=\langle x_1,y_1\rangle\cdots\langle  x_k,y_k\rangle.
$$
$\mathcal L_{2k}$ is a multilinear form, which is obviously bounded. We have
$$
\mathcal M_{2k}(x):=\|x\|^{2k}
$$ 
for any $x\in B$. Thus, if $X$ is a multilinear process with respect to the family of $B$-valued random variables 
$(X(s;t))_{0\leq s\leq t<\infty}$, then by Proposition~\ref{prop:form},
$$
\E[\|X(t)\|^{2k}  \,|\,\mathcal F_s] ]=\E[\mathcal M_{2k}(X(t)) \,|\,\mathcal F_s]]=\sum_{j=1}^{2k}\overline{\mathcal M}_j(X(s;t)),
$$
for some $j$th-order monomials $\overline{\mathcal M}_j:B\rightarrow \mathbb{K}$, $j=1,\ldots,2k$. 
So, we can compute even moments of the norm of $X$ in terms of multilinear forms operating on $X(s;t)$ of order at most $2k$, where
$X(s;t)$ is $\mathcal F_s$-measurable.

For the odd moments, we note that for $k=0,1,2,\ldots,$ it obviously holds that $2k+1=\alpha(k)\times (2k+2)$ for $\alpha(k)=(2k+1)/(2k+2)\in (1/2,1)$. One has that (see Applebaum~\cite{Apple-book}, page 80)
$$
u^{\alpha(k)}=\frac{\alpha(k)}{\Gamma(1-\alpha(k))}\int_0^{\infty}(1-e^{-u x})x^{-\alpha(k)-1}dx
$$
Thus, we find the representation
$$
\E[\|X(t)\|^{2k+1}  \,|\,\mathcal F_s ]=\frac{\alpha(k)}{\Gamma(1-\alpha(k))}\int_0^{\infty}(1-\E[\exp(-x\|X(t)\|^{2k+2})\,|\,\mathcal F_s])x^{-1-\alpha(k)}dx
$$
Doing a series representation of the exponential function inside the integral on the right hand side, we find that
$$
\E[\|X(t)\|^{2k+1}\,|\,\mathcal F_s]=\frac{\alpha(k)}{\Gamma(1-\alpha(k))}\int_0^{\infty}\sum_{\ell=1}^{\infty}\frac{(-1)^\ell}{\ell!}x^{-\alpha(k)-1+\ell}\E[\|X(t)\|^{2\ell(k+1)}\,|\,\mathcal F_s]dx
$$
Thus, we can use the multilinear property of a process $(X(t))_{t\geq 0}$ to compute an integral of an infinite series of 
even moments to find any odd moment of $\| X(t)\|$. Whether above formula is useful depends of course on the particular situation and possible closed form alternatives.

We remark that the Ornstein-Uhlenbeck process driven by a Wiener process considered in Example~\ref{mild:sol:exampl:ind:Incr} is Gaussian and if $B$ is a separable Hilbert space we may calculate $\Vert X(t)\Vert^2=\sum_{i=1}^{\infty}\langle X,e_i\rangle^2$, where $\{ e_i \}_{i\in \mathbb{N}}$ is an orthonormal basis for $B$. Hence $\Vert X(t)\Vert^{2k}=\sum_{i_1,\dots,i_k}\langle X,e_{i_1}\rangle^2 ... \langle X,e_{i_k}\rangle^2$. By the Isserlis-Wick theorem the $2k$-mixed moments $\E[\langle X,e_{i_1}\rangle^2 ... \langle X,e_{i_k}\rangle^2]$ may be calculated explicitly from the covariance operator of $X(t)$. As above one may then calculate the odd moments from the even moments. Even in the Gaussian case however, it is not clear how to calculate conditional moments and above proposed procedure may be used.

In case the space $B$ is a Banach algebra, we may calculate $\E[ X(t)^{k}  \,| \,\mathcal F_s]$ by Lemma~\ref{lemma:indep-incr-monomial} for (generalized) polynomial processes. To the best of our knowledge calculations of moments of this form have not been investigated yet.
 

\subsection{Applications to commodity markets}\label{subsec:commodity}
A forward contract is a financial arrangement where the seller promises to deliver an underlying commodity (like for example oil, coffee,
aluminium or power) at an agreed price at some future time point. Entering such a contract at time $t\geq 0$ where delivery takes place 
at time $t+x, x\geq 0$ in the future, we denote the agreed {\it forward price} by $f(t,x)$. It is known 
(see Benth and Kr\"uhner~\cite{BK}) that $t\mapsto f(t,\cdot)$ can be interpreted as a stochastic process with values in
some Hilbert space of continuous functions on $\mathbb R_+$, solving (mildly) the stochastic partial differential equation
\eqref{inf-OU} with  $\mathcal A=\partial/\partial x$. This model is a special class of a more general stochastic partial differential equation dynamics, belonging to the Heath-Jarrow-Morton modelling paradigm (see e.g. Filipovi\'c \cite{F}, Geman~\cite{Geman}, Carmona and
Tehranchi~\cite{CT} for more on this, including the case of forward rates in fixed-income markets). 

Following Benth and Kr\"uhner~\cite{BK}, a natural state space of the forward price curves is the Filipovi\'c space
(see Filipovi\'c~\cite{F}). The Filipovi\'c space $H_w$ is defined for an increasing, continuous function $w: \mathbb{R}_+ \rightarrow [1, \infty ) $ with $w(0)=1$ to be the set of functions
\begin{equation}
H_w:= \left\{ g \in AC(\mathbb{R}_+,\mathbb{R}): \int_0^\infty w(x) g' (x)^2 dx < \infty \right\},
\end{equation}
where $AC(\mathbb{R}_+,\mathbb{R})$ denotes the set of absolutely continuous functions from $\mathbb{R}_+$ to $\mathbb{R}$. The scalar product $\langle g_1,g_2 \rangle := g_1 (0) g_2 (0) + \int_0^\infty w(x) g_1' (x) g_2' (x) dx $ for $g_1, g_2 \in H_w$ makes $H_w$ a separable Hilbert space with norm $\norm{g}_w^2 = \abs{\langle g,g \rangle }$. As already observed in 
Benth and Kr\"uhner~\cite{BK} assuming $w^{-1}\in L^1(\mathbb R_+)$, the pointwise multiplication defines an algebra on 
$H_w$ and with the new norm $|\cdot|_{w,c}:=c|\cdot|_w$, where 
$c=\sqrt{1+8(1+\int_0^{\infty}w^{-1}(x)\,dx)}$ the space $H_w$ is actually a commutative Banach algebra.  

On the commutative Banach algebra $B=H_w$, we have that the shift operator $\mathcal S_t g:=g(\cdot+t)$ defines a 
$C_0$-semigroup being a homomorphism. Moreover, the generator of $(\mathcal S_t)_{t\geq 0}$ is the derivative
operator $\partial/\partial x$. Thus, in light of the discussion in Section~\ref{sec:indep:incr}, the forward curve dynamics
$(f(t,\cdot))_{t\geq 0}$ is given by the stochastic convolution process \eqref{stoch-conv}, and recalling 
\eqref{example-OU}, will become a 
generalized polynomial process on $H_w$ with respect to $(f(s,\cdot))_{0\leq s\leq t<\infty}$. In representation \eqref{example-OU},
we will have  $X^{\perp}(s;t) :=\int_s^t\mathcal S_{t-u}dW(u)$ with $\mathcal S_t$ being the shift operator. In addition it is also a multilinear process with the same decomposition.

Let us give an application where the generalized polynomial property comes in handy.  In commodity markets, options on forwards are popular risk management products. Let us consider a general payoff given by a measurable function $h:\mathbb R\rightarrow \mathbb R$ on the forward with delivery time $x$. At time $t$, the holder 
can exercise the option yielding a payment 
$$
h(f(t,x)).
$$ 
The most prominent example is $h(z)=\max(z-K,0)$ for a standard call option. 
Let $\delta_x$ denote the evaluation map at $x\geq 0$. It is shown in Filipovi\'c~\cite{D} that $\delta_x$ is a bounded linear functional on $H_w$. Thus,
$$
h(f(t,x))=h(\delta_{x} f(t)),
$$
and the price of the option at time $s\leq t$ is given by
\begin{equation}
P(s,t)=\E\left[h\left(\delta_{x}f(t)\right)\,|\,\mathcal F_s\right]
\end{equation}
assuming zero risk-free interest rate (see Benth et al.~\cite{BSBK}). Assume now that there exists a polynomial representation of
$h$, 
\begin{equation}
h(z)=\sum_{i=0}^{\infty}h_{i} z^i.
\end{equation}
For continuous functions such approximation is guaranteed by the Weierstrass Approximation Theorem by using Bernstein polynomials. Let further
\begin{equation}
P(s,t)=\sum_{i=0}^{\infty}h_{i}\E\left[(\delta_{x}f(t))^{i}\,|\,\mathcal F_s\right].
\end{equation}

As $\delta_x$ is a linear functional on $H_w$, using that $\delta_x(g\cdot h)=\delta_x(g)\delta_x(h)$ for any $g,h\in H_w$, we can apply a modified version of 
Lemma~\ref{exchange:int:operator} to show that 
$$
\delta_{x}\E[f(t)^{i}\,|\,\mathcal F_s]=\E[\delta_{x}(f(t)^{i})\,|\,\mathcal F_s]=\E[(\delta_{x}f(t))^{i}\,|\,\mathcal F_s]
$$
It follows
from \eqref{example-OU} that
\begin{align*}
P(s,t)&=\sum_{i=0}^{\infty}h_{i}\delta_{x}\E[f(t)^{i}\,|\,\mathcal F_s]\\
&=\sum_{i=0}^{\infty}h_{i}\delta_{x}\E[(X^{\perp}(s;t)+ X^{\parallel}(s;t)  )^{i}\,|\,\mathcal F_s] \\
&=\sum_{i=0}^{\infty}h_{i} \left(\sum_{k=0}^{i}\binom{i}{k}\delta_{x}(\E\left[(X^{\perp}(s;t))^{i-k}\right])\delta_{x+t-s}f^i(s)\right)
\end{align*}
where we have used that $\delta_x(g\cdot h)=\delta_x(g)\delta_x(h)$ for any $g,h\in H_w$ and $\delta_x\mathcal S_t=\delta_{x+t}$ for every $x,t\geq 0$. Note that the quantities $\delta_{x+t-s}f^i(s)= f^i(s,x+t-s),1\leq i < \infty$ can be read off from the observed forward curve $f(s)$ at time $s$. Choosing a sufficiently large cutoff $n$, an approximation for the price of the option is given by
$$
P(s,t)\approx \sum_{i=0}^{n}h_{i} \left(\sum_{k=0}^{i}\binom{i}{k}\delta_{x}(\E\left[(X^{\perp}(s;t))^{i-k}\right])\right)f^i(s,x+t-s).
$$
In some cases $\E\left[(X^{\perp}(s;t))^{i-k}\right]$ may be calculated explicitly as for example if $(X(t))_{t\geq 0}$ is the Ornstein-Uhlenbeck process studied in Example~\ref {mild:sol:exampl:ind:Incr}. In other cases the calculation of $\E\left[(X^{\perp}(s;t))^{i-k}\right]$ requires numerical techniques. However, these quantities need to be calculated only once in order to evaluate a whole trading book with options maturing at the same time.

A similar representation can be found if the option holder can exercise the option to receive a payment 
$$
h(f(t,x_1),f(t,x_2),\ldots,f(t,x_n))
$$ 
with delivery times $0\leq x_1<x_2<\cdots<x_n$ and $h:\mathbb R^n\rightarrow \mathbb R$ a measurable function. This generalization covers the important example of a spread option on two forwards with different delivery times (calendar spread option) in which case $n=2$ and $h(z_1,z_2)=\max(z_1-z_2,0)$. For these more complicated options one has to use the fact that $(f(t,\cdot))_{t\geq 0}$ is a multilinear process with respect to $(X^{\parallel}(s;t))_{0\leq s\leq t<\infty}$. 

\begin{remark}
Recall the definition of the process $(X(t))_{t\geq 0}$, which was the basis for the previous example, based on the stochastic partial differential equation (\ref{inf-OU}). To allow for a wider range of applications, it would be interesting to analyse processes $(X(t))_{t\geq 0}$ given as the mild solution of the stochastic partial differential equation
\begin{equation}
dX(t)=\mathcal A X(t) + a(t,X(t))\,dt+b(t,X(t)) dW(t)
\end{equation} 
where $\mathcal A$ is again a (densely defined) generator of a $C_0$-semigroup $(\mathcal S_t)_{t\geq 0}$ and $a: \mathbb{R}_+ \times B \rightarrow B, b : \mathbb{R}_+ \times B \rightarrow L(\widetilde{B},B)$. Here $\widetilde{B}$ is the noise space (possibly equal to $B$) and $(W(t))_{t\geq 0}$ is a Wiener or L\'evy process defined on $\widetilde{B}$. Such equation has under appropriate conditions a mild solution (see for instance Theorem 4.5. (1) in Tappe \cite{T2012}) given by
\begin{equation}
X(t)=\mathcal S_t X(0) +\int_0^t\mathcal S_{t-s} a(s,X(s)) \,ds  +  \int_0^t\mathcal S_{t-s} b(s,X(s)) \,dW(s).
\end{equation}
An interesting question for future research is then to find out if there exists a family $(X(s;t))_{0 \leq s \leq t < \infty}$ of $B$-valued random variables, such that $X(s;t)$ is strongly $\mathcal{F}_s$-measurable and such that $(X(t))_{t\geq 0}$ is a multilinear or (generalized) polynomial process with respect to the family $(X(s;t))_{0 \leq s \leq t < \infty}$.
\end{remark}
\subsection{Some other choices of Banach spaces}

A canonical example of a separable Banach space is the space $C([0,1])$ of real-valued continuous functions 
$f:[0,1]\rightarrow\mathbb R$ equipped with pointwise product and
uniform norm $|f|_{\infty}:=\sup_{x\in[0,1]}|f(x)|$. This is also a commutative Banach algebra, and we notice that it is
the path space of Brownian motion. 

Another classical separable Banach space is $L^p(\mathbb R^d)$, the space
of $p$-integrable functions on $\mathbb R^d$ for $p,d\in\mathbb N$. As is well-known, $L^2(\mathbb R^d)$ is
a Hilbert space and also possible state-space for Gaussian random fields. One can define a multiplication for 
$f,g \in L^1 (\mathbb{R}^d)$ by the convolution product, i.e. 
\begin{equation}
f \ast g (x) =\int_{\mathbb R^d} f (y-x) g (y) dy.
\end{equation}
This turns $L^1(\mathbb R^d)$ into a commutative Banach algebra. A possible application could be stochastic processes
taking values in $L^1(\mathbb R)$ being probability densities, e.g. being non-negative integrable functions with unit
mass.

Another classical Banach algebra is the space of bounded linear operators $B=L(C)$ on the Banach space $C$, equipped with the 
operator norm. The space $B$ forms a 
non-commutative Banach algebra under the standard operator product. If $C$ is separable Hilbert 
space, one can consider the subspace of Hilbert-Schmidt operators $L_{\text{HS}}(C)$, which becomes a separable
Hilbert space and in addition a Banach algebra.  
In Benth, R\"udiger and S\"uss~\cite{BRS} and
Benth and Simonsen~\cite{BS} positive-definite Hilbert-Schmidt-valued Ornstein-Uhlenbeck processes have been defined and
studied in the context of stochastic volatility models in infinite dimensions. These volatility models become multilinear processes. 

Let $(E,\mathcal E)$ be a measurable space, and denote by ${\mathbb M}(E)$ the space of all finite signed measures. Equip
${\mathbb M}(E)$ with the total variation norm, $\|\nu\|_{\text{TV}}:=|\nu|(E)$ for $\nu\in {\mathbb M}(E)$. It is known that 
$({\mathbb M}(E),\|\cdot\|_{\text{TV}})$ is a 
Banach space. Define the convolution product of measures as
$$
\nu\ast\mu(A)=\int_{E\times E}\mathbf 1_A(x+y)\nu(dx)\mu(dy)=\int_E\nu(A-y)\mu(dy)
$$
for $\nu,\mu\in{\mathbb M}(E)$ and $A\in\mathcal E$. Since $\|\nu\ast\mu\|_{\text{TV}}\leq \|\nu\|_{\text{TV}}\|\mu\|_{\text{TV}}$, 
$({\mathbb M}(E),\|\cdot\|_{\text{TV}},\ast)$ is a 
Banach algebra which obviously is commutative. A polynomial $p_k\in\text{Pol}_k({\mathbb M}(E))$ will be of the form
$p_k(\mu)=\sum_{n=0}^k\nu_n\ast\mu^{\ast n}$ for $(\nu_n)_{n=0}^k\subset{\mathbb M}(E)$. These polynomials are built up from the monomials
$\mu^{\ast n}$. Cuchiero, Larsson and Svaluto-Ferro~\cite{CLSF} define polynomial processes on ${\mathbb M}(E)$ introducing {\it monomials} as follows:
Let $g:E^k\rightarrow\mathbb R$ be a continuous symmetric function. A monomial of degree $k\in\mathbb N$ is defined as
$$
{\mathbb M}(E)\ni\nu\mapsto\langle g,\nu^k\rangle:=\int_{E^k}g(x_1,\ldots,x_k)\nu(dx_1)\cdots\nu(dx_k)
$$ 
We notice that for any $A\in\mathcal E$,  we have that 
$$
\nu^{\ast k}(A)=\int_{E^k}\mathbb I_A(x_1+\cdots+x_k)\nu(dx_1)\cdots\nu(dx_k)=\langle\mathbb I_A(x_1+\cdots+x_k),\nu^k\rangle.
$$
Although the function $(x_1,\ldots,x_k)\mapsto \mathbb I_A(x_1+\cdots+x_k)$ is obviously not continuous, it is a 
bounded measurable symmetric
function which is linking our definition of polynomial processes on ${\mathbb M}(E)$ to the one of  Cuchiero, Larsson and Svaluto-Ferro~\cite{CLSF}.

\begin{appendix}
\section{Some auxiliary results on conditional expectation in Banach spaces}\label{sec:cond:exp}

Let $B$ be a Banach space over a field $\mathbb{F}$, which can be either $\mathbb{R}$ or $\mathbb{C}$. We denote the norm by $\|\cdot\|$ and by $\mathcal B(B)$ the Borel $\sigma$-algebra of $B$. 
Further, $L(B)$ denotes the space of bounded linear operators on $B$. Let $(\Omega,\mathcal F, \mathbb P)$ be a probability space equipped with a filtration $(\mathcal{F}_t)_{t\geq 0}$. Following the usual terminology (see e.g. 
Def.~1.4 in van Neerven \cite{vNeerven}), a $B$-valued random variable $X$ is a mapping from $\Omega$ into $B$ which is {\it strongly measurable}, that is, there exists a 
sequence of simple random variables $X_n:=\sum_{i=1}^n\mathbb I_{F_i} x_i$ where $F_i\in\mathcal F$, $x_i\in B, i=1,\ldots,n, n\in\mathbb N$ such that $X_n\rightarrow X$ in $B$ 
pointwise when $n\rightarrow\infty$. Here, we use the notation $\mathbb I_F:\Omega\rightarrow\{0,1\}$ as the indicator function on a set $F\in\mathcal F$. If $B$ is separable, then strong measurability is equivalent to measurability in the sense that $X^{-1}(A)\in\mathcal F$ for any $A\in\mathcal B(B)$. As a consequence of the approximation $X_n\rightarrow X$ a random variable $X$ takes values in the closed separable subspace $B_c:=\overline{ \text{{\tt span }} \cup_{n\in \mathbb{N}}\; \text{{\tt ran}} (X_n) }$, the closure of the subspace spanned by the ranges of the $X_n$.

In van Neerven~\cite{vNeerven}, a mapping $X:\Omega\rightarrow B$ is said to be {\it strongly $\mathbb P$-measurable} if there exists a 
sequence of
simple random variables $X_n$ such that $X_n\rightarrow X$ in $B$ $\mathbb P-a.s$ as $n\rightarrow\infty$. However, in view of Prop.~1.10
in van Neerven~\cite{vNeerven}, there exists a version $\widetilde{X}$ which is strongly measurable of any strongly $\mathbb P$-measurable random variable 
$X$, and vice versa. Thus, in our analysis we will always choose the strongly measurable version of a random variable, and therefore stick
to the notion of {\it strongly measurable} throughout. 

A random variable $X$ where $\mathbb E[\|X\|]<\infty$ is said to be Bochner integrable with respect to $\mathbb P$, and we 
define $\mathbb E[X]$ to be the Bochner integral 
(see e.g. Ch.~1\S 1 (J) of Dinculeanu~\cite{D})
$$
\mathbb E[X]:=\int_{\Omega}X\,d\mathbb P
$$ 
Moreover, $\mathbb E[X]\in B$ and $\|\mathbb E[X]\|\leq\mathbb E[\|X\|]$. Given a $\sigma$-algebra $\mathcal G\subset\mathcal F$ and
a Bochner integrable $B$-valued random variable, we define the conditional expectation $\mathbb E[X\,|\,\mathcal G]$ as the 
strongly $\mathcal G$-measurable random variable satisfying
\begin{equation}
\int_{G}\mathbb E[X\,|\,\mathcal G]\,d\mathbb P=\int_{G} X\,d\mathbb P
\end{equation} 
for all $G\in\mathcal G$ (see Def.~38 in Ch.~1\S 2 of Dinculeanu~\cite{D}). Thm.~50 and Prop.~37 in Ch.~1\S 2 of Dinculeanu~\cite{D} show 
that the conditional expectation exists and is unique $\mathbb P-a.s.$.

The next result shows that (conditional) expectation commutes with bounded linear operators:
\begin{lemma}\label{exchange:int:operator}
Suppose that $X$ is a $B$-valued random variable which is Bochner integrable and $\mathcal{L} \in L(B)$. Then
$\mathcal{L}X$ is a $B$-valued random variable which is Bochner integrable, $\E [ \mathcal{L} X] = \mathcal{L} \E [X] $ and 
$$
\E [ \mathcal{L} X\,|\,\mathcal{G} ] = \mathcal{L} \E [ X\,|\,\mathcal{G} ]
$$
for any $\mathcal{G} \subset \mathcal{F}$. 
\end{lemma}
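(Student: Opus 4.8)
The plan is to reduce each assertion to its elementary counterpart for simple random variables, where it is immediate from linearity of $\mathcal{L}$, and then pass to the limit using the continuity of $\mathcal{L}$. First I would settle strong measurability and Bochner integrability of $\mathcal{L}X$: choosing simple random variables $X_n = \sum_{i} \mathbb{I}_{F_i} x_i^{(n)}$ with $X_n \to X$ pointwise, each $\mathcal{L}X_n = \sum_i \mathbb{I}_{F_i}(\mathcal{L}x_i^{(n)})$ is again simple, and $\mathcal{L}X_n \to \mathcal{L}X$ pointwise by continuity of $\mathcal{L}$, so $\mathcal{L}X$ is strongly measurable; Bochner integrability is immediate from $\norm{\mathcal{L}X} \leq \norm{\mathcal{L}}_{\text{op}} \norm{X}$ together with $\E[\norm{X}] < \infty$.

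Next I would prove $\E[\mathcal{L}X] = \mathcal{L}\E[X]$. For a simple $Y = \sum_i \mathbb{I}_{F_i} x_i$ this reads $\E[\mathcal{L}Y] = \sum_i \mathbb{P}(F_i)\,\mathcal{L}x_i = \mathcal{L}\bigl(\sum_i \mathbb{P}(F_i)\,x_i\bigr) = \mathcal{L}\E[Y]$. For general $X$, take the approximating sequence so that $\norm{X_n}\le 2\norm{X}$, which the standard construction of the Bochner integral provides; then $\E[X_n]\to\E[X]$ and $\E[\mathcal{L}X_n]\to\E[\mathcal{L}X]$ by dominated convergence, while $\mathcal{L}\E[X_n]\to\mathcal{L}\E[X]$ by continuity, and the simple-function identity passes to the limit. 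An alternative is to test against $\phi\in B^{*}$, using $\phi(\E[\,\cdot\,])=\E[\phi(\,\cdot\,)]$ and $\phi\circ\mathcal{L}\in B^{*}$, and then invoke Hahn--Banach; either route is routine.

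Finally, for the conditional statement I would verify that $\mathcal{L}\E[X\,|\,\mathcal{G}]$ has the two properties characterising $\E[\mathcal{L}X\,|\,\mathcal{G}]$. Strong $\mathcal{G}$-measurability follows from the first step applied to the strongly $\mathcal{G}$-measurable variable $\E[X\,|\,\mathcal{G}]$. For $G\in\mathcal{G}$, applying the already-established unconditional identity to $\mathbb{I}_G\,\E[X\,|\,\mathcal{G}]$ and to $\mathbb{I}_G X$ gives
\[
\int_G \mathcal{L}\E[X\,|\,\mathcal{G}]\,d\mathbb{P} = \mathcal{L}\int_G \E[X\,|\,\mathcal{G}]\,d\mathbb{P} = \mathcal{L}\int_G X\,d\mathbb{P} = \int_G \mathcal{L}X\,d\mathbb{P},
\]
the middle equality being the defining property of $\E[X\,|\,\mathcal{G}]$. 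By uniqueness of the conditional expectation (Thm.~50 in Dinculeanu~\cite{D}), $\E[\mathcal{L}X\,|\,\mathcal{G}] = \mathcal{L}\E[X\,|\,\mathcal{G}]$.

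The only step requiring genuine care is the interchange of $\mathcal{L}$ with the Bochner integral; once that is in hand, the conditional statement is a short formal verification. I do not expect a serious obstacle here — this is the Banach-space analogue of the elementary fact that linear maps commute with (conditional) expectation, and the approximation argument is precisely the one underlying the construction of the Bochner integral itself.
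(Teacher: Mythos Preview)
Your argument is correct. The paper, by contrast, does not give a direct proof at all: it simply observes that $X$ takes values in a separable closed subspace $B_c$, so that $\mathcal{L}(B_c)$ is separable, and then defers to Peszat and Zabczyk~\cite[Proposition 3.15(ii)]{peszat.zabczyk.07}. Your route is therefore genuinely different in character --- a self-contained reduction to simple functions plus dominated convergence, followed by a verification of the defining property of the conditional expectation --- whereas the paper's is a one-line citation. What your approach buys is independence from the external reference and transparency of why the result holds; what the paper's approach buys is brevity. Both are entirely standard, and your Hahn--Banach alternative for the unconditional identity is equally valid and arguably the cleanest way to see it.
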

\begin{proof}
 Since there exists a closed separable subspace $B_c\subseteq B$ with $\mathbb P[X\in B_c] = 1$, we know that also the range of $\mathcal{L}$ is separable. Now, the statement is given in Peszat and Zabczyk~\cite[Proposition 3.15(ii)]{peszat.zabczyk.07}.
\end{proof}

As the following Lemma shows, if $f:B\rightarrow B$ is continuous, then $f(X)$ is strongly measurable whenever $X$ is. This is
stronger than the first claim in the Lemma~\ref{exchange:int:operator} above. Indeed, the more general result holds: 
\begin{lemma}
\label{lemma:cont-preserve-measur}
If $f:B_1\rightarrow B_2$ is a continuous map between two Banach spaces $B_1$ and $B_2$, then $f(X)$ is a strongly measurable 
$B_2$-valued random variable for any strongly measurable $B_1$-valued random variable
$X$. 
\end{lemma}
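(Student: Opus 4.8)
The plan is to work directly from the definition of strong measurability in terms of approximating simple functions, staying entirely within explicit sequences and never invoking separability or Pettis-type characterisations.

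First I would fix a sequence of simple random variables $X_n = \sum_{i=1}^{n} \mathbb I_{F_i} x_i$, with $F_i \in \mathcal F$ and $x_i \in B_1$, such that $X_n(\omega) \to X(\omega)$ in $B_1$ for every $\omega \in \Omega$; such a sequence exists by strong measurability of $X$. The key observation is that each $X_n$ takes only finitely many values: passing to the finite algebra generated by $F_1, \dots, F_n$, whose atoms are $G_J := \bigcap_{i \in J} F_i \cap \bigcap_{i \notin J} F_i^c$ for $J \subseteq \{1, \dots, n\}$, one has $X_n = \sum_{J \subseteq \{1,\dots,n\}} \mathbb I_{G_J} v_J$ with $v_J := \sum_{i \in J} x_i$ and the $G_J \in \mathcal F$ pairwise disjoint.

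Next I would apply $f$ pointwise: $f(X_n) = \sum_{J \subseteq \{1,\dots,n\}} \mathbb I_{G_J}\, f(v_J)$, which is again a finite combination of indicators of $\mathcal F$-sets with coefficients $f(v_J) \in B_2$, hence a simple $B_2$-valued random variable. Since $f$ is continuous and $X_n(\omega) \to X(\omega)$, we get $f(X_n)(\omega) = f(X_n(\omega)) \to f(X(\omega))$ for every $\omega$, so $f(X)$ is the pointwise limit of the simple $B_2$-valued random variables $f(X_n)$; this is precisely the definition of strong measurability of $f(X)$, and the lemma follows.

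There is essentially no genuine obstacle here; the only point requiring a little care is the first step, since a simple function written as in the paper's definition need not be displayed as a sum over disjoint sets, and one must pass to the generated finite algebra to see that $X_n$ --- and therefore its image $f(X_n)$ --- really is a simple function. An alternative route would use the separable-range property recorded just after the definition of strong measurability: compose the Borel-measurable map $X$ (with values in the separable subspace $B_c$) with the continuous, hence Borel-measurable, restriction $f|_{B_c}$, and then invoke the equivalence between Borel and strong measurability on separable spaces. The direct approximation argument above is shorter and is the one I would present.
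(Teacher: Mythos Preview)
Your proposal is correct and follows essentially the same route as the paper: approximate $X$ by simple functions $X_n$, rewrite each $X_n$ over disjoint sets, observe that $f(X_n)$ is then simple, and use continuity for the pointwise limit. The only difference is that you spell out the passage to disjoint sets via the atoms $G_J$ of the finite algebra generated by $F_1,\dots,F_n$, whereas the paper simply remarks that one ``can always do this by redefining the sum''.
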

\begin{proof}
For any sequence
of simple random variables $X_n$ converging to $X$, we have that $f(X_n)$ is converging to $f(X)$ by continuity. 
If $X_n=\sum_{i=1}^n\mathbb I_{F_i}x_i$
for disjoint sets $F_i\in\mathcal F$ (we can always do this by redefining the sum), we find $f(X_n)=\sum_{i=1}^n\mathbb I_{F_i}f(x_i)$, which shows that
$(f(X_n))_{n\in\mathbb N}$ is a sequence of simple random variables converging pointwise to $f(X)$. Hence, $f(X)$ is strongly measurable. 
\end{proof}
Choosing $B=B_1$ and $B_2=\mathbb R$, we find that $\|X\|$ is a measurable real-valued random variable when $X$ is a $B$-valued strongly 
measurable random variable. This is true since $x\mapsto\|x\|$ is a continuous map, as the triangle inequality shows that 
$\|x\|\leq \|x-y\|+\|y\|$ and $\|y\|\leq\|x-y\|+\|x\|$, and therefore $\vert\|x\|-\|y\|\vert\leq \|x-y\|$. 

Let us focus on the concept of independence for $B$-valued random variables. 
We recall that $X$ is independent of a $\sigma$-algebra 
$\mathcal G\subset\mathcal F$ if the two sets $X^{-1}(A)$ and $G\in\mathcal G$ are independent for all $A\in\mathcal B(B)$ and $G\in\mathcal G$. If $f:B\rightarrow B$ is a measurable map, it follows that $f(X)$ is independent of $\mathcal G$ whenever $X$ is independent of $\mathcal G$. This is so because for any $A\in\mathcal B(B)$, $(f(X))^{-1}(A)=X^{-1}(f^{-1}(A))$ and
$f^{-1}(A)\in\mathcal B(B)$ as $f$ is measurable. Moreover, if in addition $f$ is continuous, we see from 
Lemma~\ref{lemma:cont-preserve-measur} that $f(X)$ is a strongly measurable random variable being independent of $\mathcal G$. 
As a particular case, assume that $B$ is a Banach algebra, that is, $B$ is equipped with a multiplication operator 
$\cdot:B\times B\rightarrow B$ such that $(B,+,\cdot)$ is an associative $\mathbb F$-algebra and $\|x\cdot y\|\leq \|x\|\|y\|$ for
any $x,y\in B$. Consider the map
$f:B\ni x\mapsto x^2\in B$. By the norm property in a Banach algebra,
$$
\|x^2-y^2\|=\|x(x-y)+(x-y)y\|\leq(\|x\|+\|y\|)\|x-y\|,
$$ 
it follows that $f$ is continuous. Thus, we see that $X^2$ is independent of
$\mathcal G$ whenever $X$ is independent of $\mathcal G$. By induction, we have
that $X^k$ is independent of $\mathcal G$ whenever $X$ is independent of 
$\mathcal G$ for any $k\in\mathbb N$.   

We have the following result on conditional expectation of independent random
variables:
\begin{lemma} 
\label{lemma:cond-expect-independence}   
If $X$ is a $B$-valued Bochner-integrable random variable which is independent of 
the $\sigma$-algebra $\mathcal G\subset\mathcal F$, then
$$
\E[X\,|\,\mathcal G]=\E[X].
$$
\end{lemma}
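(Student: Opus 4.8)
The plan is to exploit the defining property of the conditional expectation together with its a.s.\ uniqueness (Prop.~37 in Ch.~1\S 2 of Dinculeanu~\cite{D}). The constant random variable $\E[X]$ is trivially strongly $\mathcal G$-measurable, so it suffices to check that it satisfies the defining integral identity of $\E[X\,|\,\mathcal G]$, namely
$$
\int_G \E[X]\,d\mathbb P=\int_G X\,d\mathbb P\qquad\text{for every }G\in\mathcal G ,
$$
that is, $\int_G X\,d\mathbb P=\mathbb P(G)\,\E[X]$. Once this is established, uniqueness of the conditional expectation gives $\E[X\,|\,\mathcal G]=\E[X]$.

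First I would reduce to simple random variables of a controlled form. Since $X$ is strongly measurable it takes values, up to a null set, in a closed separable subspace $B_c\subseteq B$; working intrinsically on the separable metric space $B_c$ (using a countable dense set and closest-point rounding) one constructs Borel-measurable maps $g_n\colon B_c\to B_c$ with finite range such that $g_n(x)\to x$ and $\|g_n(x)\|\le 2\|x\|$ for all $x\in B_c$. Setting $X_n:=g_n(X)=\sum_{i=1}^{m_n}\mathbb I_{X^{-1}(A_{i,n})}\,x_{i,n}$ with disjoint $A_{i,n}\in\mathcal B(B)$, we obtain simple random variables $X_n\to X$ pointwise, each $\sigma(X)$-measurable and hence independent of $\mathcal G$.

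Next I would verify the identity for each $X_n$ and then pass to the limit. For $G\in\mathcal G$, independence of $X^{-1}(A_{i,n})$ and $G$ gives
$$
\int_G X_n\,d\mathbb P=\sum_i \mathbb P\big(X^{-1}(A_{i,n})\cap G\big)x_{i,n}=\mathbb P(G)\sum_i \mathbb P\big(X^{-1}(A_{i,n})\big)x_{i,n}=\mathbb P(G)\,\E[X_n].
$$
Since $\|X_n\|\le 2\|X\|$ with $\|X\|$ integrable and $X_n\to X$ pointwise, the dominated convergence theorem for the Bochner integral yields $\int_G X_n\,d\mathbb P\to\int_G X\,d\mathbb P$ and $\E[X_n]\to\E[X]$; hence $\int_G X\,d\mathbb P=\mathbb P(G)\,\E[X]$ for all $G\in\mathcal G$, as required.

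The only genuinely delicate point — and the step I expect to be the main obstacle — is the first reduction: the arbitrary approximating sequence furnished by strong measurability need not consist of \emph{functions of $X$}, so its simple functions need not be independent of $\mathcal G$. One therefore has to rebuild the approximation on the separable range of $X$ so that $\sigma(X_n)\subseteq\sigma(X)$; everything afterwards is routine. As an alternative one could argue by duality: for each $\phi\in B^*$ the scalar random variable $\phi(X)$ is integrable and independent of $\mathcal G$, so by the classical real/complex-valued result and by the same reasoning as in Lemma~\ref{exchange:int:operator}, $\phi\big(\E[X\,|\,\mathcal G]\big)=\E[\phi(X)\,|\,\mathcal G]=\E[\phi(X)]=\phi(\E[X])$ a.s.; choosing a countable norm-determining family of functionals on a separable subspace containing both $\E[X\,|\,\mathcal G]$ and $\E[X]$ then forces $\E[X\,|\,\mathcal G]=\E[X]$ a.s. I would nonetheless prefer the approximation route, as it is self-contained and in keeping with the elementary style of this appendix.
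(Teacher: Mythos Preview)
Your proof is correct. Both the approximation argument and the duality alternative are valid; the only slightly delicate point you already identified (building the simple approximants as Borel functions of $X$ so that $\sigma(X_n)\subseteq\sigma(X)$) is handled by your closest-point rounding on the separable range, provided you include $0$ in the dense set so that $\|g_n(x)-x\|\le\|x\|$ and hence $\|g_n(x)\|\le 2\|x\|$.

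The paper's own proof, however, takes a very different---and much shorter---route: it simply observes that $X$ lives in a separable closed subspace $B_c$ (so one may assume $B$ separable) and then cites Peszat and Zabczyk~\cite[Proposition 3.15(v)]{peszat.zabczyk.07} for the result. Your argument is thus essentially a self-contained derivation of what the paper delegates to a reference. This buys you independence from the external source and keeps the appendix elementary, at the cost of some length; the paper's approach is terse but relies on the reader having access to \cite{peszat.zabczyk.07}. Either is perfectly acceptable here, and your duality alternative is in fact the cleanest of the three, since it reduces immediately to the scalar case via Lemma~\ref{exchange:int:operator} and a separating family of functionals.
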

\begin{proof}
Since $X$ is Bochner-integrable there is a separable closed subspace $B_c\subseteq B$ such that $\mathbb P[X\in B_c] = 1$. Consequently, we may assume that $B$ is separable. The statement is given in Peszat and Zabczyk~\cite[Proposition 3.15(v)]{peszat.zabczyk.07}.
\end{proof}

In our analysis, a "freezing property" of conditional expectation is important. To this end, we equip the product space
$B\times B$ of the Banach space $B$ with the max-norm, i.e.,
for any $x=(x_1,x_2)\in B\times B$, $\|x\|_2:=\max_{i=1,2}\|x_i\|$. Then, $B\times B$ is a Banach space again.
\begin{proposition}\label{feezing:lemma}
Let $\mathcal G\subset\mathcal F$ be a $\sigma$-algebra and $X$ a $B$-valued random variable independent of 
$\mathcal G$. Let further $Y$ be a $B$-valued random variable which is strongly $\mathcal G$-measurable and $f: B \times B \rightarrow B$ continuous. Assume 
$f(X,y)$ is Bochner integrable for every $y\in B$ and $y\mapsto\E[f(X,y)]$ is continuous, and moreover that there exist positive $\mathbb R$-valued random variables $Z,\widetilde{Z}$ with $\E[ Z  ]<\infty, \E[ \widetilde{Z}  ] <\infty $ and
\begin{align}
\| \E[f(X,y)]_{y=\widetilde{Y}} \| &\leq  Z \label{bound:1} \\
\| f(X,\widetilde{Y}) \| &\leq  \tilde{Z} \label{bound:2} ,
\end{align}
for every $B$-valued random variable $\widetilde{Y}$ such that $\| \widetilde{Y} \| \leq \| Y \|$ a.s., then
$$
\E[f(X,Y) \,|\,\mathcal G]=\E[f(X,y)]_{y=Y}.
$$
\end{proposition}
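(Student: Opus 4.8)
The plan is to establish the identity first when $Y$ is a simple $\mathcal G$-measurable random variable and then to extend to the general case by approximation, using the domination bounds \eqref{bound:1} and \eqref{bound:2} to pass to the limit.

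First I would treat $Y=\sum_{i=1}^m\mathbb I_{G_i}y_i$ with the $G_i\in\mathcal G$ forming a partition of $\Omega$ and $y_i\in B$. For fixed $y\in B$ the map $x\mapsto f(x,y)$ is continuous, hence measurable, so $f(X,y)$ is independent of $\mathcal G$ by the remarks following Lemma~\ref{lemma:cont-preserve-measur}; being Bochner integrable, Lemma~\ref{lemma:cond-expect-independence} gives $\E[f(X,y)\,|\,\mathcal G]=\E[f(X,y)]$, whence $\int_{G}f(X,y)\,d\mathbb P=\mathbb P(G)\,\E[f(X,y)]$ for every $G\in\mathcal G$. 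Summing over $i$, for each $G\in\mathcal G$,
\[
\int_G f(X,Y)\,d\mathbb P=\sum_{i=1}^m\int_{G\cap G_i}f(X,y_i)\,d\mathbb P=\sum_{i=1}^m\mathbb P(G\cap G_i)\,\E[f(X,y_i)]=\int_G\Big(\sum_{i=1}^m\mathbb I_{G_i}\E[f(X,y_i)]\Big)\,d\mathbb P .
\]
Since $\sum_{i=1}^m\mathbb I_{G_i}\E[f(X,y_i)]=\E[f(X,y)]_{y=Y}$ is simple and $\mathcal G$-measurable, it equals $\E[f(X,Y)\,|\,\mathcal G]$, which is the assertion for simple $Y$.

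Next, for general strongly $\mathcal G$-measurable $Y$, I would pick a sequence $(Y_n)$ of simple, $\mathcal G$-measurable $B$-valued random variables with $Y_n\to Y$ pointwise and, crucially, $\|Y_n\|\le\|Y\|$ almost surely. By joint continuity of $f$ and Lemma~\ref{lemma:cont-preserve-measur}, $f(X,Y_n)\to f(X,Y)$ pointwise, and by continuity of $y\mapsto\E[f(X,y)]$, also $\E[f(X,y)]_{y=Y_n}\to\E[f(X,y)]_{y=Y}$ pointwise. Applying \eqref{bound:2} with $\widetilde Y\in\{Y_n,Y\}$ dominates $(f(X,Y_n))_n$ by the integrable $\widetilde Z$, and applying \eqref{bound:1} likewise dominates $(\E[f(X,y)]_{y=Y_n})_n$ by the integrable $Z$; hence, by the scalar dominated convergence theorem applied to $\|f(X,Y_n)-f(X,Y)\|$ and to $\|\E[f(X,y)]_{y=Y_n}-\E[f(X,y)]_{y=Y}\|$, both sequences converge in $L^1$. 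Since conditional expectation is an $L^1$-contraction, $\E[f(X,Y_n)\,|\,\mathcal G]\to\E[f(X,Y)\,|\,\mathcal G]$ in $L^1$ as well, and passing to the limit in the simple-case identity $\E[f(X,Y_n)\,|\,\mathcal G]=\E[f(X,y)]_{y=Y_n}$ yields $\E[f(X,Y)\,|\,\mathcal G]=\E[f(X,y)]_{y=Y}$.

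The hard part is the construction in the last step of the simple approximants with the sharp pointwise bound $\|Y_n\|\le\|Y\|$: the usual simple-function approximation of a strongly measurable map only yields $\|Y_n\|\le 2\|Y\|$, which would not match the hypotheses as stated. I would obtain the sharp bound by choosing, at stage $n$, among the first $n$ points of a fixed countable dense subset of the separable closed subspace carrying $Y$ (always including $0$) the point closest to $Y(\omega)$ whose norm does not exceed $\|Y(\omega)\|$; density together with a scaling-toward-the-origin argument near $Y(\omega)$ secures pointwise convergence, the norm constraint is automatic, and $\mathcal G$-measurability is clear since the selection is governed by finitely many $\mathcal G$-measurable inequalities in $\|Y-b_j\|$ and $\|b_j\|$. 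Everything else is a routine combination of Lemma~\ref{lemma:cond-expect-independence}, dominated convergence, and the $L^1$-contractivity of conditional expectation.
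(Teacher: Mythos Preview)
Your proof is correct and follows essentially the same approach as the paper's: establish the identity for simple $\mathcal G$-measurable $Y$ via independence (Lemma~\ref{lemma:cond-expect-independence}), then pass to the limit using simple approximants $Y_n$ with $\|Y_n\|\le\|Y\|$ together with the domination bounds \eqref{bound:1}--\eqref{bound:2}. The only cosmetic differences are that the paper cites Dinculeanu~\cite{D}, Ch.~1\S1~C, Thm.~6 for the sharp-norm simple approximation you construct by hand, and it verifies the defining identity $\int_G f(X,Y)\,d\mathbb P=\int_G \E[f(X,y)]_{y=Y}\,d\mathbb P$ directly on each $G\in\mathcal G$ rather than invoking $L^1$-contractivity of conditional expectation.
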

\begin{proof}
First, by Lemma~\ref{lemma:cont-preserve-measur}, we note that continuity of 
$f$ implies that $f(X,Y)$ and $f(X,y)$ are strongly measurable for any $y\in B$ (using $B_2=B$ and $B_1=B\times B$ or
$B_2=B$). Choosing $\widetilde{Y}=Y$ in \eqref{bound:2},
we find that $\E[\|f(X,Y)\|]\leq\E[\widetilde{Z}]<\infty$, and therefore $f(X,Y)$ is Bochner integrable. 

We need to show that $\E[ \mathbb I_{G} f(X,Y)  ] =\E[ \mathbb I_{G} \E [ f(X,y) ]_{y=Y} ]$ for any $G\in \mathcal G$.
 For this, let $G\in \mathcal G$ and recall that since $Y$ is strongly $\mathcal G$-measurable, there exists a sequence of 
$\mathcal G$-simple random variables $Y_n=\sum_{i=1}^ny_i\mathbb I_{A_i}$
with $y_i\in B$ and $A_i\in\mathcal G$ such that $Y_n\rightarrow Y$
pointwise. Moreover, by Thm.~6 in Ch.~1\S1 C of Dinculeanu~\cite{D} we can choose $Y_n$ such that  $\|Y_n\|\leq \|Y\|$. We also notice that the we can select the sets $A_1,\ldots,A_n$ to be disjoint, as we do. 
We see that
$$
f(X,Y_n)=\sum_{i=1}^n\mathbb I_{A_i}f(X,y_i)
$$
and by assumption \eqref{bound:2}, we calculate
\begin{align*}
\infty>\E[\|f(X,Y_n)\|]&=\E[\sum_{i=1}^n\mathbb I_{A_i}\|f(X,y_i)\|] \\
&=\sum_{i=1}^n\E[\mathbb I_{A_i}\|f(X,y_i)\|] \\
&=\sum_{i=1}^n
\mathbb P(A_i)\E[\|f(X,y_i)\|].
\end{align*}
In the last equality we used the fact that $\|f(X,y_i)\|$ is independent on $\mathcal G$, as $f$ and $\|\cdot\|$ are continuous functions and
$X$ is independent of $\mathcal G$ by assumption. In particular, this shows that $\E[\|f(X,y_i)\|]<\infty$, and hence $f(X,y_i)$ is
Bochner integrable. Therefore,
by Lemma~\ref{lemma:cond-expect-independence} it follows that 
\begin{equation}\label{y:fix:cond:exp}
\E[f(X,y_i)]=\E[f(X,y_i) \,|\,\mathcal G]
\end{equation} 
On the other hand,
$$
\E[f(X,y)]_{y=Y_n}=\sum_{i=1}^n\mathbb I_{A_i}\E[f(X,y_i)].
$$
Hence, $\E[f(X,y)]_{y=Y_n}$ is strongly $\mathcal G$-measurable and Bochner integrable since, from norm inequality of Bochner integrals and assumption \eqref{bound:2}
\begin{align*}
\E\left[\|\E[f(X,y)]_{y=Y_n}\|\right]&=\E\left[\sum_{i=1}^n\mathbb I_{A_i}\|\E[f(X,y_i)]\|\right] \\
&=\sum_{i=1}^n\mathbb P(A_i)
\|\E[f(X,y_i)]\| \\
&\leq\sum_{i=1}^n\mathbb P(A_i)\E[\|f(X,y_i)\|]\\
&=\E[\|f(X,Y_n)\|]<\infty.
\end{align*}
Thus, we calculate 
\begin{align*}
\mathbb E[\mathbb I_G \E [f(X,y) ]_{y=Y_n} ]&=\E \left[ \sum_{i=1}^n \mathbb I_G\mathbb I_{A_i}  \E [f(X,y_i) ]  \right] \\
&=\sum_{i=1}^n \E [ \mathbb I_{G \cap A_i} \E [ f(X,y_i) ]  ]   \\
&=\sum_{i=1}^n \E [ \mathbb I_{G \cap A_i}  f(X,y_i) ]   \\
&=\E [  \mathbb I_{G }  f(X,Y_n) ],
\end{align*} 
where the third equality uses (\ref{y:fix:cond:exp}) and the defining properties of conditional expectation. To see that in fact $\mathbb E[\mathbb I_G \E [f(X,y) ]_{y=Y} ]=\E [  \mathbb I_{G }  f(X,Y) ]$ we need to show that $\lim_{n\rightarrow \infty} \mathbb E[\mathbb I_G \E [f(X,y) ]_{y=Y_n} ]=\mathbb E[\mathbb I_G \E [f(X,y) ]_{y=Y} ]$ and $\lim_{n\rightarrow \infty} \E [  \mathbb I_{G }  f(X,Y_n) ] = \E [  \mathbb I_{G }  f(X,Y) ]$. For this note that since $f$ is continuous, $\mathbb I_Gf(X,Y_n)\rightarrow \mathbb I_Gf(X,Y)$ 
when $n\rightarrow\infty$. As $ \| Y_n \| \leq \|Y \|$, we have
$$
\|  \mathbb I_{G }  f(X,Y_n)   \| \leq \mathbb I_G \| f(X,Y_n)\| \leq  \tilde{Z}  
$$
and thus from dominated convergence it follows that
$$
\lim_{n\rightarrow\infty}\E[\mathbb I_{G }  f(X,Y_n)]=\E[\mathbb I_{G }  f(X,Y)] 
$$
For the other limit, we have from the continuity assumption on $y\mapsto\E[f(X,y)]$ that 
$\mathbb I_G\E[f(X,y)]_{y=Y_n}\rightarrow\mathbb I_G\E[f(X,y)]_{y=Y}$ pointwise in $B$. 
Furthermore, by assumption~\eqref{bound:1} 
$$
\| \mathbb I_G \E [f(X,y) ]_{y=Y_n} \| \leq \mathbb I_G \| \E [f(X,y) ]_{y=Y_n} \| \leq Z  
$$
Then, by dominated convergence, we obtain
$$
\lim_{n\rightarrow \infty} \mathbb E[\mathbb I_G \E [f(X,y) ]_{y=Y_n} ]=\mathbb E[\mathbb I_G \E [f(X,y) ]_{y=Y} ],
$$
and the proposition follows.
\end{proof}
Remark that condition \eqref{bound:1} is only used once, to obtain a uniform bound on $\| \E [f(X,y) ]_{y=Y_n} \|$. Further,
the continuity assumption on the function $y\mapsto\E[f(X,y)]$ is only used to have pointwise convergence. Both are used only in connection with concluding the final limit in the proof above. 


In the remainder of this appendix we will focus on the case when $B$ is a Banach algebra, and in particular
show the following fundamental property for conditional expectation:
Let $\mathcal G\subset\mathcal F$ be a $\sigma$-algebra and $Y$ a
$\mathcal{G}$-strongly measurable $B$-valued random variable, then
$$
\mathbb E[YX\,|\,\mathcal G]=Y\mathbb E[X\,|\,\mathcal G]\,, \mathbb E[XY\,|\,\mathcal G]=\mathbb E[X\,|\,\mathcal G] Y
$$
where $X$ is a $B$-valued random variable, with $X$, $YX$ and $XY$ such
that the conditional expectations are well-defined. 


First, we show a Lemma which will become convenient:
\begin{lemma}
\label{lem:bochner-factorization}
Let $(S,\Sigma,\mu)$ be a measure space and $B$ a Banach algebra. Suppose $F:(S,\Sigma,\mu)\rightarrow B$ is $\mu$-integrable
(that is, Bochner integrable with respect to $\mu$) and $g\in B$. Then
$gF$ and $Fg$ are $\mu$-integrable, and 
$$
\int_SgF(s)\,\mu(ds)=g\int_SF(s)\,\mu(ds)\,\qquad
\int_SF(s)g\,\mu(ds)=\int_SF(s)\,\mu(ds)g.
$$
\end{lemma}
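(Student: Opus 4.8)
The plan is to reduce the statement to simple functions and then pass to the limit, exploiting the fact that left and right multiplication by $g$ are bounded linear operators on $B$. First I would observe that $L_g\colon B\to B$, $L_g(x)=gx$, and $R_g\colon B\to B$, $R_g(x)=xg$, are linear and bounded, since the Banach algebra inequality gives $\|gx\|\le\|g\|\|x\|$ and $\|xg\|\le\|g\|\|x\|$; in particular they are continuous. By Lemma~\ref{lemma:cont-preserve-measur} the maps $gF$ and $Fg$ are then strongly measurable whenever $F$ is, and the pointwise bounds $\|gF(s)\|\le\|g\|\|F(s)\|$ and $\|F(s)g\|\le\|g\|\|F(s)\|$ together with $\mu$-integrability of $\|F\|$ show that $gF$ and $Fg$ are $\mu$-integrable.

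Next I would verify the two identities for $\mu$-simple functions, which is immediate by linearity: if $F=\sum_{i=1}^n\mathbb I_{A_i}x_i$ with $A_i\in\Sigma$ of finite measure and $x_i\in B$, then $gF=\sum_{i=1}^n\mathbb I_{A_i}(gx_i)$, so by the definition of the Bochner integral of a simple function, $\int_S gF\,d\mu=\sum_{i=1}^n\mu(A_i)\,gx_i=g\sum_{i=1}^n\mu(A_i)\,x_i=g\int_S F\,d\mu$, and symmetrically $\int_S Fg\,d\mu=\bigl(\int_S F\,d\mu\bigr)g$.

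For general $F$ I would invoke the approximation underlying the definition of Bochner integrability: there is a sequence of simple functions $F_n$ with $F_n\to F$ pointwise $\mu$-a.e., with $\|F_n\|\le\|F\|$ (Thm.~6 in Ch.~1\S1 C of Dinculeanu~\cite{D}), and $\int_S\|F_n-F\|\,d\mu\to 0$. Then $\int_S\|gF_n-gF\|\,d\mu\le\|g\|\int_S\|F_n-F\|\,d\mu\to 0$, so the norm inequality for Bochner integrals gives $\int_S gF_n\,d\mu\to\int_S gF\,d\mu$; on the other hand $\int_S F_n\,d\mu\to\int_S F\,d\mu$, and continuity of $L_g$ yields $g\int_S F_n\,d\mu\to g\int_S F\,d\mu$. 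Since $\int_S gF_n\,d\mu=g\int_S F_n\,d\mu$ for every $n$ by the simple-function case, the two limits agree, i.e. $\int_S gF\,d\mu=g\int_S F\,d\mu$. The argument for $Fg$ is identical with $R_g$ in place of $L_g$.

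There is essentially no real obstacle: the entire content is that multiplication by a fixed algebra element is a bounded linear operator and that Bochner integration commutes with bounded linear operators. The only points requiring a word of care are the strong measurability of the products $gF$ and $Fg$ (supplied by Lemma~\ref{lemma:cont-preserve-measur}) and choosing the approximating sequence to be dominated in norm by $\|F\|$ so that the limits can be passed inside the integral.
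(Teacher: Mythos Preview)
Your proof is correct and rests on the same idea as the paper's: left and right multiplication by $g$ are bounded linear operators $\mathcal L_g$, $\mathcal R_g$ on $B$, and the Bochner integral commutes with such operators. The paper simply invokes that commutation fact directly, writing $\int_S \mathcal R_gF\,d\mu=\mathcal R_g\bigl(\int_S F\,d\mu\bigr)$ in one line, whereas you unpack it by verifying it on simple functions and passing to the limit; this is a harmless difference in level of detail rather than a different approach.
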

\begin{proof}
  Define the continuous linear maps
  \begin{align*}
   \mathcal L_g&:B\rightarrow B, b\mapsto gb, \\
   \mathcal R_g&:B\rightarrow B, b\mapsto bg.
\end{align*}   
 Then we find
  $$ \int_S F(s)g \mu(ds) = \int_S \mathcal R_g F(s) \mu(ds) = \mathcal R_g\left(\int_S \mathcal F(s) \mu(ds)\right) = \int_S \mathcal F(s) \mu(ds)g. $$
  Similar with $\mathcal L_g$.
\end{proof}
The next Lemma shows that measurability is preserved under the product operation
in the Banach algebra:
\begin{lemma}
\label{lemma:product-measurability}
Let $\mathcal G\subset\mathcal F$ be a $\sigma$-algebra, and suppose that $Y$ and $Z$ are two strongly $\mathcal G$-measurable $B$-valued random 
variables and that $B$ is a Banach algebra. Then $YZ$ and $ZY$ are strongly $\mathcal G$-measurable $B$-valued random variables. 
\end{lemma}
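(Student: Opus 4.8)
The plan is to reduce the claim to the continuity of multiplication in a Banach algebra together with the defining approximation property of strong measurability, in the same spirit as the proof of Lemma~\ref{lemma:cont-preserve-measur}.

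First I would use that $Y$ and $Z$ are strongly $\mathcal G$-measurable to pick sequences of $\mathcal G$-simple random variables $Y_n=\sum_{i=1}^{m_n}\mathbb I_{F_i^n}y_i^n$ and $Z_n=\sum_{j=1}^{l_n}\mathbb I_{G_j^n}z_j^n$ with $F_i^n,G_j^n\in\mathcal G$ and $y_i^n,z_j^n\in B$, such that $Y_n\to Y$ and $Z_n\to Z$ pointwise on $\Omega$. As in the proof of Lemma~\ref{lemma:cont-preserve-measur}, for each fixed $n$ we may assume the sets $F_1^n,\dots,F_{m_n}^n$ are pairwise disjoint, and likewise the $G_j^n$. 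Then
\[
Y_nZ_n=\sum_{i=1}^{m_n}\sum_{j=1}^{l_n}\mathbb I_{F_i^n\cap G_j^n}\,y_i^n z_j^n ,
\]
which is again a $\mathcal G$-simple $B$-valued random variable, since $F_i^n\cap G_j^n\in\mathcal G$ and $y_i^n z_j^n\in B$; the same computation applies to $Z_nY_n$.

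Next I would verify that $Y_nZ_n\to YZ$ pointwise. For $\omega\in\Omega$, using the Banach algebra inequality $\|ab\|\le\|a\|\,\|b\|$,
\[
\|Y_n(\omega)Z_n(\omega)-Y(\omega)Z(\omega)\|\le\|Y_n(\omega)\|\,\|Z_n(\omega)-Z(\omega)\|+\|Y_n(\omega)-Y(\omega)\|\,\|Z(\omega)\| .
\]
Since $Y_n(\omega)\to Y(\omega)$, the sequence $(\|Y_n(\omega)\|)_n$ is bounded, and both summands tend to $0$; hence $Y_n(\omega)Z_n(\omega)\to Y(\omega)Z(\omega)$. The identical estimate with the two factors interchanged gives $Z_n(\omega)Y_n(\omega)\to Z(\omega)Y(\omega)$. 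Thus $YZ$ and $ZY$ are pointwise limits of $\mathcal G$-simple random variables and therefore strongly $\mathcal G$-measurable.

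There is essentially no real obstacle: the only points requiring a little care are that the product of two $\mathcal G$-simple functions is genuinely $\mathcal G$-simple (which forces the bookkeeping with the intersections $F_i^n\cap G_j^n$) and that submultiplicativity of the norm—not merely separate continuity of multiplication—is what drives the convergence. One could alternatively package the argument by observing that $(Y,Z)$ is a strongly $\mathcal G$-measurable $B\times B$-valued random variable and that the multiplication map $B\times B\ni(a,b)\mapsto ab\in B$ is continuous, then invoke the $\mathcal G$-measurable analogue of Lemma~\ref{lemma:cont-preserve-measur}; but the direct argument above is just as short.
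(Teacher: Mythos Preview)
Your proof is correct. The paper actually takes precisely the alternative you sketch at the end: it observes that $(Y,Z)$ is strongly $\mathcal G$-measurable as a $B\times B$-valued random variable and that the multiplication map $\eta:B\times B\to B$ is continuous, then applies Lemma~\ref{lemma:cont-preserve-measur} directly. Your main argument simply unpacks this by hand, and the estimate you write down is exactly the proof that $\eta$ is (jointly) continuous; so the two approaches are equivalent, with the paper's version being the more compressed one-liner.
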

\begin{proof}
The pair $(Y,Z)$ is strongly $B\times B$-measurable and the multiplication $\eta$ on $B$ is a continuous map from $B\times B$ to $B$. Thus, Lemma \ref{lemma:cont-preserve-measur} yields that $YZ = \eta(Y,Z)$ is strongly $\mathcal G$-measurable.
\end{proof}
We come to our final result of this appendix:
\begin{proposition}
\label{prop:fact-cond-expect}
Let $B$ be a Banach algebra and $\mathcal G\subset\mathcal F$ a $\sigma$-algebra. 
Assume that $X$ and $Y$ are two $B$-valued random variables where
$Y$ is strongly $\mathcal G$-measurable, $X$ is Bochner-integrable and $\|X\|\|Y\|$ is $\mathbb P$-integrable. Then
$$
\mathbb E[YX\,|\,\mathcal G]=Y\mathbb E[X\,|\,\mathcal G]\,,\qquad
\mathbb E[XY\,|\,\mathcal G]=\mathbb E[X\,|\,\mathcal G]Y.
$$
\end{proposition}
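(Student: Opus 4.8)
The plan is to reduce the statement to the operator identity already established in Lemma~\ref{exchange:int:operator}, by first handling $\mathcal G$-simple $Y$ and then passing to a limit. I will only describe the argument for the first identity $\mathbb E[YX\,|\,\mathcal G]=Y\,\mathbb E[X\,|\,\mathcal G]$; the second one follows verbatim with left multiplication by scalars/elements replaced by right multiplication (using $\mathcal R_g:b\mapsto bg$ in place of $\mathcal L_g$).

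First I would check that all objects are well defined. The pair $(Y,X)$ is strongly $\mathcal F$-measurable with values in $B\times B$, and since the multiplication $B\times B\to B$ is continuous (by the Banach algebra estimate $\|x_1x_2-y_1y_2\|\le\|x_1\|\|x_2-y_2\|+\|x_1-y_1\|\|y_2\|$, exactly as in the proof of Lemma~\ref{lemma:product-measurability}), Lemma~\ref{lemma:cont-preserve-measur} gives that $YX$ is strongly $\mathcal F$-measurable; the bound $\|YX\|\le\|Y\|\|X\|$ together with the hypothesis that $\|X\|\|Y\|$ is $\mathbb P$-integrable shows $YX$ is Bochner integrable, so $\mathbb E[YX\,|\,\mathcal G]$ exists. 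Likewise $Y\,\mathbb E[X\,|\,\mathcal G]$ is strongly $\mathcal G$-measurable by Lemma~\ref{lemma:product-measurability} (both factors being strongly $\mathcal G$-measurable), and it is Bochner integrable because $\|Y\,\mathbb E[X\,|\,\mathcal G]\|\le\|Y\|\,\mathbb E[\|X\|\,|\,\mathcal G]$ and, $\|Y\|$ being $\mathcal G$-measurable, $\mathbb E[\|Y\|\,\mathbb E[\|X\|\,|\,\mathcal G]]=\mathbb E[\|X\|\|Y\|]<\infty$. By uniqueness of the conditional expectation it therefore suffices to verify $\int_G YX\,d\mathbb P=\int_G Y\,\mathbb E[X\,|\,\mathcal G]\,d\mathbb P$ for every $G\in\mathcal G$.

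For the simple case I would approximate $Y$ by $\mathcal G$-simple random variables $Y_n=\sum_{i=1}^n y_i\mathbb I_{A_i}$ with disjoint $A_i\in\mathcal G$ and $\|Y_n\|\le\|Y\|$ (Dinculeanu~\cite{D}, Thm.~6, Ch.~1\S1 C), so that $Y_nX=\sum_i\mathbb I_{A_i}\mathcal L_{y_i}X$, where $\mathcal L_{y_i}\in L(B)$ is left multiplication by $y_i$. Using the elementary identity $\mathbb E[\mathbb I_A W\,|\,\mathcal G]=\mathbb I_A\,\mathbb E[W\,|\,\mathcal G]$ for $A\in\mathcal G$ and Bochner integrable $W$ (immediate from the defining property of the conditional expectation) together with Lemma~\ref{exchange:int:operator} applied to $\mathcal L_{y_i}$, I obtain $\mathbb E[Y_nX\,|\,\mathcal G]=\sum_i\mathbb I_{A_i}\mathcal L_{y_i}\mathbb E[X\,|\,\mathcal G]=Y_n\,\mathbb E[X\,|\,\mathcal G]$, hence in particular $\int_G Y_nX\,d\mathbb P=\int_G Y_n\,\mathbb E[X\,|\,\mathcal G]\,d\mathbb P$ for all $G\in\mathcal G$.

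Finally I would pass to the limit $n\to\infty$. Since multiplication is continuous, $\mathbb I_G Y_nX\to\mathbb I_G YX$ and $\mathbb I_G Y_n\,\mathbb E[X\,|\,\mathcal G]\to\mathbb I_G Y\,\mathbb E[X\,|\,\mathcal G]$ pointwise in $B$; the dominations $\|Y_nX\|\le\|Y\|\|X\|\in L^1$ and $\|Y_n\,\mathbb E[X\,|\,\mathcal G]\|\le\|Y\|\,\mathbb E[\|X\|\,|\,\mathcal G]\in L^1$ allow Bochner dominated convergence and yield $\int_G YX\,d\mathbb P=\int_G Y\,\mathbb E[X\,|\,\mathcal G]\,d\mathbb P$ for every $G\in\mathcal G$, which completes the proof. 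The step requiring the most care — and the reason the hypothesis is $\|X\|\|Y\|\in L^1$ rather than merely $X\in L^1$ — is precisely the construction of these integrable dominating functions (and, relatedly, verifying that $Y_n\,\mathbb E[X\,|\,\mathcal G]$ and $Y\,\mathbb E[X\,|\,\mathcal G]$ are Bochner integrable in the first place); the remaining steps are routine bookkeeping with simple functions and Lemma~\ref{exchange:int:operator}.
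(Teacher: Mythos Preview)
Your proof is correct and follows essentially the same route as the paper: approximate $Y$ by $\mathcal G$-simple $Y_n$ with $\|Y_n\|\le\|Y\|$, verify the identity for $Y_n$, and pass to the limit via dominated convergence with the same dominating functions $\|Y\|\|X\|$ and $\|Y\|\,\mathbb E[\|X\|\,|\,\mathcal G]$. The only cosmetic difference is that in the simple-function step you invoke Lemma~\ref{exchange:int:operator} for the operator $\mathcal L_{y_i}$ at the level of conditional expectations, whereas the paper works directly with integrals over $G$ and pulls out the constants $y_i$ via Lemma~\ref{lem:bochner-factorization}; since the latter lemma is itself proved via $\mathcal L_g,\mathcal R_g\in L(B)$, the two arguments are equivalent.
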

\begin{proof}
First, $\|XY\|\leq\|X\|\|Y\|$ and $\|YX\|\leq\|Y\|\|X\|$, so both $XY$ and $YX$ are Bochner integrable, and moreover, the conditional expectations of $XY$ and $YX$ with respect to $\mathcal G$ are well-defined. By assumption, the conditional expectation of $X$ with respect to $\mathcal G$ is also well-defined. 

By definition of the conditional expectation, $\mathbb E[X\,|\,\mathcal G]$
is strongly $\mathcal G$-measurable, and thus by Lemma~\ref{lemma:product-measurability}, 
$Y\mathbb E[X\,|\,\mathcal G]$ is $\mathcal{G}$-strongly measurable.
Let $Y_n=\sum_{i=1}^ny_i\mathbb I_{A_i}$
with $y_i\in B$ and $A_i\in\mathcal G$ for $i=1,\ldots,n$ be a sequence of $\mathcal G$-simple random variables 
such that $Y_n\rightarrow Y$
pointwise and by Thm.~6 in Ch.~1\S 1 C of Dinculeanu~\cite{D}, $\|Y_n\|\leq \|Y\|$. Let
$G\in\mathcal G$ be an arbitrary set. We find
\begin{align*}
\mathbb E[\mathbb I_G Y_nX]&=\sum_{i=1}^n\mathbb E[\mathbb I_G\mathbb I_{A_i}y_iX] \\
&=\sum_{i=1}^ny_i\mathbb E[\mathbb I_{G\cap A_i}X] \\
&=\sum_{i=1}^ny_i\mathbb E[\mathbb I_{G\cap A_i}\mathbb E[X\,|\,\mathcal G ]] \\
&=\sum_{i=1}^n\mathbb E[\mathbb I_G\mathbb I_{A_i}y_i\mathbb E[X\,|\,\mathcal G ]] \\
&=\mathbb E[\mathbb I_G Y_n\mathbb E[X\,|\,\mathcal G]]
\end{align*}  
In the second and fourth equalities we applied Lemma~\ref{lem:bochner-factorization}, and in
the third the definition of the conditional expectation. Now, 
$$
\|\mathbb I_G Y_nX\|\leq\mathbb I_G\|Y_n\|\|X\|
\leq\|Y\|\|X\|\in L^1(P)
$$
by assumption. Thus, as $Y_nX\rightarrow YX$, it follows by dominated convergence that  
$\mathbb E[\mathbb I_GY_nX]\rightarrow\mathbb E[\mathbb I_G YX]$. On the other hand,
$$
\|\mathbb I_G Y_n\mathbb E[X\,|\,\mathcal G]]\|\leq \mathbb I_G\|Y_n\|
\mathbb E[\|X\|\,|\,\mathcal G]\leq\|Y\|\mathbb E[\|X\|\,|\,\mathcal G]\,.
$$
$a.s.$, by Jensen's inequality (Property 44 in Ch.1\S 2 H of Dinculeanu~\cite{D}).
As $Y$ is strongly $\mathcal G$-measurable, it follows from Lemma~\ref{lemma:cont-preserve-measur} that 
$\|Y\|$ is a real-valued $\mathcal G$-measurable random variable. 
From the properties of conditional expectation for 
real-valued random variables 
$$
\mathbb E[\|Y\|\mathbb E[\|X\|\,|\,\mathcal G]]=\mathbb E[\mathbb E[\|Y\|\|X\|\,|\,\mathcal G]]=\mathbb E[\|Y\|\|X\|]<\infty
$$
by assumption. Hence, $\|Y\|\mathbb E[\|X\|\,|\,\mathcal G]$ is $P$-integrable, and since obviously it holds
pointwise that $\mathbb I_G Y_n\mathbb E[X\,|\,\mathcal G]\rightarrow\mathbb I_G Y\mathbb E[X\,|\,\mathcal G]$ we find by 
dominated convergence that
$\mathbb E[\mathbb I_G Y_n\mathbb E[X\,|\,\mathcal G]]\rightarrow\mathbb E
[\mathbb I_GY\mathbb E[X\,|\,\mathcal G]]$. We can therefore conclude
\begin{align*}
\mathbb E[\mathbb I_GY\mathbb E[X\,|\,\mathcal G]]=
\lim_{n\rightarrow\infty}\mathbb E[\mathbb I_GY_n\mathbb E[X\,|\,\mathcal G]] 
=\lim_{n\rightarrow\infty}\mathbb E[\mathbb I_GY_nX] 
=\mathbb E[\mathbb I_GYX].
\end{align*} 
Hence, the first result of the Proposition is proven. The second part follows in
the same manner. 
\end{proof}

\end{appendix}

\end{document}